\def \D{\mathbb{D}}
\def \DK{\mathbb{D}_K}
\def \M{\mathbb{M}}
\def \MK{\mathbb{M}_K}
\def \Th{\mathcal{T}_h}
\newcommand{\V}[1]{\mbox{\boldmath $ #1 $}}
\newcommand{\bey}{\begin{eqnarray}}
\newcommand{\eey}{\end{eqnarray}}
\newcommand{\nn}{\nonumber}
\newcommand{\beq}{\begin{equation}}
\newcommand{\eeq}{\end{equation}}
\theoremstyle{plain}
\newtheorem{thm}{\hspace{6mm}{\textbf Theorem}}[section]
\newtheorem{lem}{\hspace{6mm}{\textbf Lemma}}[section]
\theoremstyle{definition}
\theoremstyle{remark}
\newtheorem{exam}{\hspace{6mm}{\textbf Example}}[section]
\newtheorem{rem}{\hspace{6mm}{\textbf Remark}}[section]
\begin{document}

\date{}
\title{A study on nonnegativity preservation in finite element approximation of Nagumo-type nonlinear differential equations}

\author{Xianping Li%
\thanks{Department of Mathematics and Statistics, the University of Missouri-Kansas City, Kansas City, MO 64110, U.S.A. (\textit{lixianp@umkc.edu}) }
\and Weizhang Huang%
\thanks{Department of Mathematics, the University of Kansas, Lawrence, KS 66045, U.S.A. (\textit{whuang@ku.edu}) }
}

\maketitle

\vspace{10pt}

\begin{abstract}
Preservation of nonnengativity and boundedness in the finite element solution of Nagumo-type equations with general anisotropic diffusion is studied. 
Linear finite elements and the backward Euler scheme are used for the spatial and temporal discretization, respectively. An explicit, an implicit, and two hybrid explicit-implicit treatments for the nonlinear reaction term are considered. Conditions for the mesh and the time step size are developed for the numerical solution to preserve nonnegativity and boundedness. The effects of lumping of the mass matrix and the reaction term
are also discussed. The analysis shows that the nonlinear reaction term has significant effects on the conditions for both the mesh and the time step size. Numerical examples are given to demonstrate the theoretical findings.
\end{abstract}

\noindent
\textbf{AMS 2010 Mathematics Subject Classification.} 65M60, 65M50

\noindent
\textbf{Key words.} {finite element method, anisotropic diffusion, nonnegativity  preservation,
boundedness preservation, maximum principle	}

\vspace{10pt}

\section{Introduction}
\label{sec-intro}

We consider Nagumo-type equations in the form
\bey
\label{nagumo}
&& \frac{\partial u}{\partial t}  -  \nabla \cdot (\D \nabla u) = u f(u),  \quad \text{ in }\quad \Omega_T = \Omega \times (0,T] \\
\label{bc}
&& u(\V{x},t)   =  g(\V{x},t), \qquad \qquad \quad \text{ on }\quad  \partial \Omega \times (0, T] \\
\label{ic}
&& u(\V{x},0)  =  u_0(\V{x}),  \qquad  \qquad \quad \text{\,  in }\quad \Omega
\eey
where $\Omega \subset \mathbb{R}^d$ is a connected, bounded polygonal or polyhedral domain ($d=1, 2$ or $3$ is the space dimension),
$\V{x}=(x_1, \cdots,x_d)^T$ denotes the coordinates on $\Omega$, $T > 0$ is a fixed time, $f(u)$, $g(\V{x},t)$, and $u_0(\V{x})$
are given functions, and $\D$ denotes the diffusion tensor that can be isotropic (when it is in the form of $\D = \alpha I$, with
$\alpha$ being a scalar function and $I$ being the identity matrix) or anisotropic.
We assume that $\D$ is  continuous and symmetric and uniformly positive definite on $\Omega$.
In our analysis, $f(u)$  and $f'(u)$ are assumed to be continuous and bounded for all bounded $u$. 
A special example is the well-known Nagumo equation \cite{Mck70,NAY62}
where $f(u)=(1-u)(u-a)$ and $a \in (0,1)$ is a parameter.  It can be shown (using an argument similar to the proof
of the maiximum principle for linear parabolic equations \cite{Evans-1998}) that the solution of the problem \eqref{nagumo}, \eqref{bc}, and \eqref{ic} is nonnegative when $g$ and $u_0$ are nonnegative.
Moreover, for the case of the Nagumo equation where $f(1) = 0$,
we have the boundedness $u \le 1$ when $g \le 1$ and $u_0 \le 1$.

Nagumo-type equations arise in various fields including physiology, biological and chemical processes,
and ecology; e.g., see \cite{AA06, GK09, KS98, LEM10, MK14, RP12, TVBN15, WGK11, Yan85}.
The Nagumo equation also appears as one of the two coupled equations in the well-known
Fitzhugh-Nagumo model \cite{Dik05, Fit61, GK10, KRS04, LV06}. 
Understanding the former will help with understanding the latter.
The numerical solution of Nagumo-type equations and corresponding models has been studied extensively
in the past; e.g., see
\cite{Abb08, CGM03, Den91,  Dik05, Has76, Jon84, KSS97, LV06, Mck70, QDD15, RM11}.
However, very little attention has so far been paid to the studies of preservation of solution nonnegativity and boundedness
in the discretization of Nagumo-type equations. The existing work includes \cite{CGM03, Mac12, QDD15, RM11} where
only finite difference discretization and isotropic diffusion have been considered.
On the other hand, preservation of solution nonnegativity and boundedness has important physical implications
and has attracted considerable attention from researchers in the recent years
\cite{CGM03, FH06, FHK11, FKK12, LH10, LH13, LHQ14, LePot09,  WaZh11, YuSh2008, ZS11}. 
For example, Li and Huang \cite{LH10, LH13} have developed conditions for general linear diffusion equations
for the linear finite element solution to satisfy maximum principle (MP). 
The schemes in \cite{LePot09,YuSh2008} (finite volume methods) and
\cite{ZS11} (discontinuous Galerkin schemes)
can be used for nonlinear equations but only allow a small time step due to their explicit time integration.

The objective of this work is to study preservation of solution nonnengativity and boundedness in the finite element
approximation of Nagumo-type equations with general anisotropic diffusion.
We consider anisotropic diffusion here because Nagumo-type equations with anisotropic diffusion can arise from various applications. For example, in cardiac electrophysiology,
the conductivity tensor varies with location and direction. The typical conductivities in cardiac tissue are 0.05 m/s for the sinoatrial and the atrioventricular node, 1 m/s for the atrial pathways, the His bundle and the ventricular muscle bundle, and 4 m/s in the Purkinje fibers \cite{GK09}. In chemical systems with excitable and oscillatory media, the diffusion tensor is also heterogeneous and anisotropic \cite{MS06}.
We shall use linear finite elements and the backward Euler scheme
for the spatial and temporal discretization, respectively. Four treatments of the nonlinear reaction
term will be considered, including an explicit, an implicit, and two hybrid explicit-implicit treatments.
Conditions for the mesh and the time step size will be developed for the numerical solution to preserve
nonnegativity and boundedness. The effects of lumping of the mass matrix and the reaction term
will also be discussed. It is emphasized that the current work is a nontrivial extension of our previous work
\cite{LH13} where only linear diffusion equations have been studied.
As will be seen, the nonlinear reaction term can have significant effects on the mesh and time step size conditions.

The rest of the paper is organized as follows. Sect.~\ref{sec-fem} introduces the finite element formulation
for (\ref{nagumo}). Conditions for nonnegativity preservation using different treatments of Nagumo nonlinearity
are developed in Sect.~\ref{sec-cond}. Lumping for the mass matrix and the reaction term is discussed
in Sect.~\ref{sec-lumping}, followed by the investigation of  boundedness preservation
in Sect.~\ref{sec-boundedness}. Numerical results are presented in Sect.~\ref{sec-ex},
and conclusions are drawn in Sect.~\ref{sec-con}.

\section{Finite element formulation}

\label{sec-fem}

In this section we describe the linear finite element approximation of the Nagumo-type equation (\ref{nagumo}). 
Assume that an affine family of simplicial triangulations $\{ \Th \}$ is given for $\Omega$. Define
\[
U_g = \{ v \in H^1(\Omega),\quad  v|_{\partial \Omega} = g\}.
\]
Let $g_h$ be a piecewise linear approximation of $g$ on $\Th$.
We denote the linear finite element space associated with $\Th$ and $g_h$ by $U^h_{g_h}$.
A linear finite element solution $u_h(t) \in U^h_{g_h}, \; t \in (0, T]$ for \eqref{nagumo} is defined by
\beq
\label{fem-form}
\int_{\Omega} \frac{\partial u_h}{\partial t} \, v_h  d\V{x} + 
\int_{\Omega} (\nabla v_h)^T \; \D \nabla u_h d\V{x} =
 \int_{\Omega} u_h f(u_h) \, v_h d\V{x}, \quad \forall v_h \in U_0^h
\eeq
where $U_0^h$ is the subspace of the linear finite element space with vanishing boundary values.

The above equation can be cast in matrix form.
Denote the numbers of the elements, vertices, and interior vertices of $\Th$
by $N_e$, $N_v$, and $N_{vi}$, respectively.
For notational simplicity, we assume that the vertices have been ordered in such a way that
the first $N_{vi}$ vertices are the interior vertices.
Let $\phi_j$ be the linear basis function associated with the $j$-th vertex, $\V{x}_j$.
Then we can express $u_h$ as
\beq
\label{soln-approx}
u_h = \sum_{j=1}^{N_{v}} u_j \phi_j .
\eeq
Inserting this into (\ref{fem-form}) and taking $v_h = \phi_i$ ($i=1, ..., N_{vi}$) successively,
we obtain the matrix form of the semi-discrete system as
\beq
\label{fem-sys}
M \, \frac{d\V{u}}{d t} + A \, \V{u} = \V{b}(u_h) + \V{g},
\eeq
where $\V{u} = (u_1,..., u_{N_{vi}}, u_{N_{vi}+1},..., u_{N_v})^T$ is the unknown vector and $M$ and $A$
are the mass and stiffness matrices, respectively. The entries of the matrices are given by
\begin{align}
\label{matM}
& m_{ij} = \begin{cases}
	\int_{\Omega} \phi_j \phi_i \, d\V{x} = \sum\limits_{K \in \Th} \int_K \phi_j \phi_i \, d\V{x}, & i=1, ..., N_{vi} \\
	0, & i=N_{vi}+1, ..., N_{v} 
\end{cases} 
\\
\label{matA}
& a_{ij} = \begin{cases}
	\int_\Omega (\nabla \phi_i)^T \; \D \nabla \phi_j \, d\V{x}
	= \sum\limits_{K \in \Th} |K| (\nabla \phi_i )^T \; \DK  \nabla \phi_j , & i=1, ..., N_{vi} \\
	\delta_{ij}, & i=N_{vi}+1, ..., N_{v}
\end{cases}
\end{align}
where $j=1, ..., N_{v}$, $\DK$ is the average of $\D$ over $K$, and $|K|$ denotes the volume of $K$.
The right-hand side vectors $\V{b}(u_h)=(b_i)$ and $\V{g}=(g_i)$ are given by
\begin{align}
\label{vecb}
& b_i = \begin{cases}
	\int_\Omega u_h f(u_h) \phi_i \, d\V{x} = \sum\limits_{K \in \Th} \int_K u_h f(u_h) \phi_i \, d\V{x} , & i=1, ..., N_{vi} \\
	0, & i=N_{vi}+1, ..., N_{v} 
\end{cases}  
\\
\label{vecg}
& g_i = \begin{cases}
	0, & i=1, ..., N_{vi} \\
	g(\V{x}_i,t), & i=N_{vi}+1, ..., N_{v} .
\end{cases}
\end{align}

For the time discretization we denote the numerical approximation of the solution at $t = t_n$ by $u_h^n$.
Applying the backward Euler method to all but the reaction term in \eqref{fem-sys}, we get
\beq
\label{fem-dt}
M \, \frac{\V{u}^{n+1}-\V{u}^n}{\Delta t_n} + A \, \V{u}^{n+1} = \V{\tilde{b}}(\V{u}^n,\V{u}^{n+1}) + \V{g}^{n+1},
\eeq
where $\Delta t_n = t_{n+1}-t_n$, $\V{g}^{n+1} = \V{g}(t_{n+1})$,
and $\V{\tilde{b}}(\V{u}^n,\V{u}^{n+1})$ is an approximation of $\V{b}(u_h)$ for the time step. 

We investigate four approximations in the current work.
The first one (called the explicit method or EM) is to define  $\V{\tilde{b}}(\V{u}^n,\V{u}^{n+1}) = \V{b}(u_h^n)$.
This explicit treatment has been commonly used in the so-called implicit-explicit integration of semi-linear parabolic
equations; e.g., see \cite{ARS97, ARW95, CS10, KCGH07}. 
The second approximation is a fully implicit method (IM) that treats the product implicitly, i.e.,
$\V{\tilde{b}}(\V{u}^n,\V{u}^{n+1}) = \V{b}(u_h^{n+1})$. 
The third and last approximations are hybrid explicit-implicit methods (HEIM) that
use an explicit treatment for some factors and implicit treatment for others in the product. 
The detail of these approximations and their effects
on the preservation of nonnegativity and boundedness of the solution will be discussed in the next section.

\section{Preservation of nonnegativity}
\label{sec-cond}

In this section we describe four approximations of the reaction term and
study the conditions on the mesh and the time step under which
the numerical solution of the system (\ref{fem-sys}) preserves the nonnegativity of the solution
of the continuous problem. 
We assume that $f(u)$ and $f'(u)$ exist and are continuous  and bounded for any  bounded $u \in \mathbb{R}$.

\subsection{Dihedral angles and nonobtuse angle conditions}
\label{sec-notation}

Consider a generic element $K \in \Th$ and denote its vertices by $\V{x}_{0}, \, ...,\, \V{x}_d$.
Note that it is more proper to denote these vertices by $\V{x}_{0}^K,\, ...,\, \V{x}_d^K$.
For notational simplicity we suppress the superscript $K$ as long as no confusion is caused.
This convention will apply to other related quantities.
Define the edge matrix (of size $d \times d$) of $K$  as 
\beq
\label{matE}
E = [\V{x}_1-\V{x}_0, \cdots, \V{x}_d-\V{x}_0].
\eeq
Notice that $E$ is non-singular as long as $K$ is not degenerate. Using the edge matrix we can define
the so-called $\V{q}$-vectors as
\beq
\label{q-vector}
[\V{q}_1, \cdots, \V{q}_d] = E^{-T}, \quad \V{q}_0 = - \sum_{i=1}^d \V{q}_i .
\eeq
Denote the face opposite to vertex $\V{x}_i$ (i.e., the face not having $\V{x}_i$ as its vertex) by $S_i$.

\begin{lem}
\label{lem-qnormal}
The vector $\V{q}_i$ is normal to the face $S_i$ for $i=0,1,\cdots, d$. 
\end{lem}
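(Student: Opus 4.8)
The plan is to work directly from the definitions of the edge matrix $E$ and the $\V{q}$-vectors, and to characterize the face $S_i$ by a linear functional that vanishes on it. First I would handle the faces $S_1, \ldots, S_d$, i.e.\ the faces containing $\V{x}_0$. Fix $i \in \{1, \ldots, d\}$. The face $S_i$ is the convex hull of $\{\V{x}_0\} \cup \{\V{x}_j : j \neq i,\ 1 \le j \le d\}$, so a vector $\V{n}$ is normal to $S_i$ precisely when $\V{n}^T (\V{x}_j - \V{x}_0) = 0$ for every $j \neq i$ with $1 \le j \le d$. By the definition \eqref{q-vector}, $[\V{q}_1, \cdots, \V{q}_d] = E^{-T}$, which is equivalent to $E^T [\V{q}_1, \cdots, \V{q}_d] = I$, i.e.\ $(\V{x}_k - \V{x}_0)^T \V{q}_j = \delta_{kj}$ for all $k, j \in \{1, \ldots, d\}$. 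Reading off the column $j = i$, this gives $(\V{x}_k - \V{x}_0)^T \V{q}_i = \delta_{ki}$, so in particular $(\V{x}_k - \V{x}_0)^T \V{q}_i = 0$ for all $k \neq i$. Hence $\V{q}_i$ is orthogonal to every edge of $S_i$ emanating from $\V{x}_0$, and since those $d-1$ edges span the hyperplane containing $S_i$ (the element is nondegenerate, so $E$ is invertible and the edges are linearly independent), $\V{q}_i$ is normal to $S_i$.

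Next I would treat the remaining face $S_0$, the face opposite $\V{x}_0$, using $\V{q}_0 = -\sum_{i=1}^d \V{q}_i$. The face $S_0$ is the convex hull of $\{\V{x}_1, \ldots, \V{x}_d\}$, so its direction space is spanned by the differences $\V{x}_k - \V{x}_1$ for $k = 2, \ldots, d$ (or equivalently by $\V{x}_k - \V{x}_m$ for any two indices $k, m \ge 1$). I would compute $\V{q}_0^T (\V{x}_k - \V{x}_m)$ by writing $\V{x}_k - \V{x}_m = (\V{x}_k - \V{x}_0) - (\V{x}_m - \V{x}_0)$ and using the orthogonality relations just established: for $i \ge 1$, $\V{q}_i^T(\V{x}_k - \V{x}_0) = \delta_{ik}$, so $\sum_{i=1}^d \V{q}_i^T (\V{x}_k - \V{x}_0) = 1$ for every $k \ge 1$, and likewise $\sum_{i=1}^d \V{q}_i^T (\V{x}_m - \V{x}_0) = 1$. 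Therefore $\V{q}_0^T(\V{x}_k - \V{x}_m) = -(1 - 1) = 0$, which shows $\V{q}_0$ is orthogonal to the direction space of $S_0$, hence normal to $S_0$. One should also note $\V{q}_0 \neq 0$: since $E^{-T}$ is invertible its columns $\V{q}_1, \ldots, \V{q}_d$ are linearly independent, so their sum cannot vanish — actually that argument is not quite enough, but applying $(\V{x}_k - \V{x}_0)^T$ to $\V{q}_0$ for any fixed $k \ge 1$ gives $-1 \neq 0$, so $\V{q}_0 \neq 0$.

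There is no serious obstacle here; the proof is essentially bookkeeping with the identity $E^T[\V{q}_1, \cdots, \V{q}_d] = I$. The one point that deserves a moment's care is the nondegeneracy argument: one must invoke that $K$ is a genuine (nondegenerate) simplex so that $E$ is invertible, that the $d-1$ edge vectors spanning a given face are linearly independent, and hence that a nonzero vector orthogonal to all of them is indeed a (the) normal direction of that $(d-1)$-dimensional face. I would state this explicitly once and then the two cases $i \ge 1$ and $i = 0$ follow from the displayed orthogonality relations. \proofend
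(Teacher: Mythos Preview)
Your proposal is correct and follows essentially the same route as the paper: for $i\ge 1$ you read off $(\V{x}_k-\V{x}_0)^T\V{q}_i=\delta_{ki}$ from $E^T[\V{q}_1,\ldots,\V{q}_d]=I$, and for $i=0$ you expand $\V{q}_0^T(\V{x}_k-\V{x}_m)$ via $\V{x}_0$ and sum, exactly as the paper does. Your added remarks on nondegeneracy (so the $d-1$ edge vectors span the face) and on $\V{q}_0\neq 0$ are more explicit than the paper's proof but do not change the argument.
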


\begin{proof}
For $i=1,\cdots,d$, from the definition \eqref{q-vector} we have
\[
\V{q}_i^T (\V{x}_j-\V{x}_0) = \delta_{ij}, \quad j = 1, ..., d
\]
which implies that $\V{q}_i$ is orthogonal to $(d-1)$ edges of $S_i$ and therefore orthogonal to $S_i$ itself.
For $\V{q}_0$, we have, for $i, j= 1, ..., d$,
\bey
\nn
\V{q}_0^T (\V{x}_i - \V{x}_j) &=& \V{q}_0^T (\V{x}_i - \V{x}_0) - \V{q}_0^T (\V{x}_j - \V{x}_0) \\
\nn
&=& - \sum_{k=1}^d \V{q}_k^T (\V{x}_i - \V{x}_0) + \sum_{k=1}^d \V{q}_k^T (\V{x}_j - \V{x}_0) \\
\nn
&=& - \sum_{k=1}^d \delta_{ki} + \sum_{k=1}^d \delta_{kj}
= -1 + 1 = 0,
\eey
which implies that $\V{q}_0$ is orthogonal to $S_0$. 
\end{proof}

\begin{lem}
\label{lem-qgradient}
The vector $\V{q}_i$ is equal to 
the gradient of the linear basis function at $\V{x}_i$, i.e., $\V{q}_i = \nabla \phi_i $,
for $i=0,1,\cdots, d$.
\end{lem}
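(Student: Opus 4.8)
The plan is to exploit the two defining properties of the linear basis functions on a simplex $K$: each $\phi_i$ is an affine function on $K$ (hence has a constant gradient on $K$), and it satisfies the nodal interpolation conditions $\phi_i(\V{x}_j) = \delta_{ij}$ for $j = 0, 1, \dots, d$. Since $\phi_i$ is affine on $K$, for any two vertices $\V{x}_j$ and $\V{x}_k$ we have $\phi_i(\V{x}_j) - \phi_i(\V{x}_k) = (\nabla \phi_i)^T (\V{x}_j - \V{x}_k)$, with $\nabla \phi_i$ the (constant) gradient on $K$.

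First I would treat the case $i = 1, \dots, d$. Applying the affine identity with $\V{x}_k = \V{x}_0$ gives $(\nabla \phi_i)^T (\V{x}_j - \V{x}_0) = \phi_i(\V{x}_j) - \phi_i(\V{x}_0) = \delta_{ij}$ for $j = 1, \dots, d$. In matrix form, $E^T \nabla \phi_i = \V{e}_i$, the $i$-th standard basis vector, so $\nabla \phi_i = E^{-T} \V{e}_i = \V{q}_i$ by the definition \eqref{q-vector} of the $\V{q}$-vectors. For $i = 0$, I would use the partition-of-unity property $\sum_{i=0}^d \phi_i \equiv 1$ on $K$, which upon taking gradients yields $\nabla \phi_0 = -\sum_{i=1}^d \nabla \phi_i = -\sum_{i=1}^d \V{q}_i = \V{q}_0$, again by \eqref{q-vector}. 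Alternatively, one can argue directly: $(\nabla \phi_0)^T(\V{x}_j - \V{x}_0) = \phi_0(\V{x}_j) - \phi_0(\V{x}_0) = 0 - 1 = -1$ for $j = 1, \dots, d$, i.e. $E^T \nabla \phi_0 = -(1, \dots, 1)^T$, and then check that $-\sum_{i=1}^d \V{q}_i$ satisfies the same system.

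I do not anticipate a genuine obstacle here; the lemma is essentially a restatement of the barycentric-coordinate formula, and the only thing to be careful about is making the affine identity $\phi_i(\V{x}_j) - \phi_i(\V{x}_k) = (\nabla\phi_i)^T(\V{x}_j - \V{x}_k)$ explicit, since this is what converts nodal values into a linear system for the gradient. One could also phrase the whole argument in terms of barycentric coordinates $\lambda_i$, noting $\phi_i = \lambda_i$ on $K$ and that $E^{-T}$ is exactly the matrix whose columns are $\nabla \lambda_1, \dots, \nabla \lambda_d$; but the direct computation above is self-contained and avoids invoking that identification. As a side benefit, combining this lemma with Lemma \ref{lem-qnormal} shows that $\nabla \phi_i$ is normal to the face $S_i$, which is the geometric fact underlying the dihedral-angle and nonobtuse-angle conditions used later in the stiffness-matrix analysis.
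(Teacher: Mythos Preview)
Your proposal is correct and essentially follows the same approach as the paper: both arguments amount to establishing the matrix identity $E^{T}\nabla\phi_i = \V{e}_i$ for $i=1,\dots,d$ and then handling $i=0$ via the partition of unity $\sum_i \phi_i = 1$. The only cosmetic difference is that the paper obtains this identity by differentiating the barycentric relation $\sum_{i=1}^d (\V{x}_i-\V{x}_0)\phi_i = \V{x}-\V{x}_0$, whereas you read it off directly from the nodal conditions $\phi_i(\V{x}_j)=\delta_{ij}$ together with affinity; the resulting linear system and its inversion are identical.
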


\begin{proof}
We first prove for the result for $i=1,\cdots,d$. From the definition of the linear basis functions, we have
\[
\sum_{i=0}^d \phi_i = 1, \qquad \sum_{i=0}^d \phi_i \V{x}_i = \V{x}. 
\]
Combining them we get
\[
\sum_{i=1}^d (\V{x}_i -\V{x}_0) \phi_i = \V{x} - \V{x}_0.
\]
Differentiating both sides with respect to $\V{x}$, we have
\[
\sum_{i=1}^d (\V{x}_i -\V{x}_0) (\nabla \phi_i)^T = I,
\]
where $I$ is the $d \times d$ identity matrix. This equation can be rewritten in matrix form as
\[
E [\nabla \phi_1, \cdots, \nabla \phi_d]^T = I ,
\]
which implies that
\[ 
[\nabla \phi_1, \cdots, \nabla \phi_d] = E^{-T} .
\]
From the definition of the $\V{q}$-vectors (\ref{q-vector}), we know  $\V{q}_i = \nabla \phi_i$ for $i=1,..., d$.
Moreover, differentiating $\sum\limits_{i=0}^d \phi_i = 1$, we have
\[
\nabla \phi_0 = - \sum_{i=1}^d \nabla \phi_i = - \sum_{i=1}^d \V{q}_i = \V{q}_0. 
\]
\end{proof}

\begin{figure}[thb]
\centering
\includegraphics[width=2.5in]{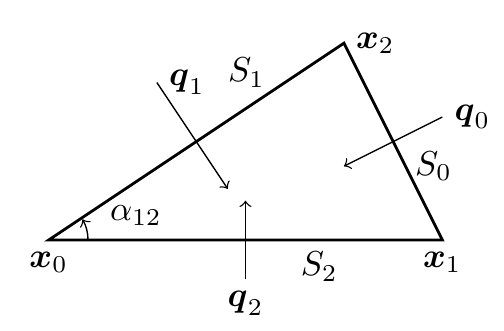}
\caption{The $\V{q}$-vectors, faces, and the dihedral angle $\alpha_{12}$
between $S_1$ and $S_2$ for a triangular element.}
\label{fig:triangle}
\end{figure}

From the above two lemmas one can see that $\V{q}_i$ ($i= 0, 1, ..., d$) are along the fastest ascent direction
(i.e., the direction pointing to the vertex $\V{x}_i$) and thus the inward normal direction of $S_i$.
The $\V{q}$-vectors and the faces for a triangular element are illustrated in Fig.~\ref{fig:triangle}.

Recall that the dihedral angles are defined as the angles between
any two different faces. Then, from the above lemmas we can compute the angles using the $\V{q}$-vectors as
\beq
\cos ({\alpha}_{ij}) = - \frac{\V{q}_i^T \V{q}_j} {\| \V{q}_i \| \cdot \|\V{q}_j \|}
= - \frac{(\nabla \phi_i)^T \nabla \phi_j} {\| \nabla \phi_i \| \cdot \|\nabla \phi_j \|}, \quad i \ne j, \quad i,j=0,1,\cdots, d
\label{dihedral}
\eeq
where $\| \cdot \|$ denotes the $l_2$ vector norm.
From this, the well-known nonobtuse angle condition \cite{BKK07,CR73} can be rewritten as
\beq
\label{nonobtuse}
(\nabla \phi_i)^T \nabla \phi_j \le 0, \quad \forall i \ne j, \quad i,j=0,1,\cdots, d, \quad \forall K \in \Th.
\eeq

The expressions of the dihedral angles can also be derived in a similar way for the metric specified
by $\M_K$, the average of a metric tensor $\M$ over $K$. Notice that $\M_K$ is constant on $K$ and a metric
tensor is always assumed to be symmetric and uniformly positive definite on $\Omega$.
Recall that the Riemannian distance in $\M_K$ is defined as
\[
\| \V{x} \|_{\M_K} = \sqrt{\V{x}^T \M_K \V{x}} = \sqrt{(\M_K^{\frac 1 2}\V{x})^T (\M_K^{\frac 1 2} \V{x})}.
\]
Thus, computing the dihedral angles of $K$ in the metric $\M_K$ is equivalent to computing
those for the simplex (denoted by $\tilde{K}$) with vertices $\M_K^{\frac 1 2} \V{x}_0,\; ...,\; \M_K^{\frac 1 2} \V{x}_d$.
The edge matrix and $\V{q}$-vectors of $\tilde{K}$ are given by
\begin{align*}
& \tilde{E} = [\M_K^{\frac{1}{2}}(\V{x}_1-\V{x}_0), \cdots, \M_K^{\frac{1}{2}} (\V{x}_d-\V{x}_0)] = \M_K^{\frac{1}{2}} E,
\\
& [\tilde{\V{q}}_1,\; ...,\; \tilde{\V{q}}_d] = \tilde{E}^{-T} = \M_K^{-\frac{1}{2}} E^{-T}
= [\M_K^{-\frac{1}{2}}\V{q}_1,\; \cdots,\; \M_K^{-\frac{1}{2}}\V{q}_d],
\\
& \tilde{\V{q}}_0 = - \sum_{i=1}^d \tilde{\V{q}}_i = \M_K^{-\frac{1}{2}}\V{q}_0 .
\end{align*}
Using these, the dihedral angles  of $K$ in the metric $\M_K$ can be expressed as
\beq
\label{dihedral-M-0}
\cos (\tilde{\alpha}_{ij}) = - \frac{\V{\tilde{q}}_i^T \V{\tilde{q}}_j} {\| \V{\tilde{q}}_i \| \cdot \|\V{\tilde{q}}_j \|}
= - \frac{\V{q}_i^T \M_K^{-1} \V{q}_j} {\| \V{q}_i \|_{\M_K^{-1}} \cdot \|\V{q}_j \|_{\M_K^{-1}}}
= - \frac{(\nabla \phi_i)^T \M_K^{-1} \nabla \phi_j} {\| \nabla \phi_i \|_{\M_K^{-1}} \cdot \|\nabla \phi_j \|_{\M_K^{-1}}}, \quad i \ne j .
\eeq

In our analysis, the metric tensor is chosen as the inverse of the diffusion matrix, viz., $\M = \D^{-1}$. In this case, we have
\beq
\label{dihedral-M}
\cos (\tilde{\alpha}_{ij})
= - \frac{(\nabla \phi_i)^T \DK \nabla \phi_j} {\| \nabla \phi_i \|_{\DK} \cdot \|\nabla \phi_j \|_{\DK}}, \quad i \ne j .
\eeq
Then, the mesh is nonobtuse in the metric $\D^{-1}$ if
\beq
\label{anoac-1}  
(\nabla \phi_i)^T \DK \nabla \phi_j \le 0, \quad \forall i \ne j, \quad  i,j=0,1,\cdots, d, \quad \forall K \in \Th .
\eeq
This is referred to as the anisotropic nonobtuse angle condition (ANOAC). It was first used in \cite{LH10}
for the analysis of preservation of the maximum principle in the linear finite element solution
of linear anisotropic diffusion problems. The following lemma is quoted from \cite{LH10}.

\begin{lem}
\label{lem-anoac}
The stiffness matrix $A$ in \eqref{matA} is an $M$-matrix and has nonnegative row sums
if the mesh satisfies ANOAC (\ref{anoac-1}).
\end{lem}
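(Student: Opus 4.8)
The proof plan is to verify the two defining properties separately: that $A$ is an $M$-matrix, and that its row sums are nonnegative. I would begin with the off-diagonal sign pattern, since that is where ANOAC enters. For a boundary row $i>N_{vi}$, \eqref{matA} gives $a_{ij}=\delta_{ij}$, so the off-diagonal entries vanish. For an interior row $i\le N_{vi}$ and $j\ne i$, write $a_{ij}=\sum_{K\in\Th}|K|(\nabla\phi_i)^T\DK\nabla\phi_j$ and note that on any element $K$ not having both $\V{x}_i$ and $\V{x}_j$ as vertices at least one of the two gradients is zero; hence only elements containing both vertices contribute, and on each of them ANOAC \eqref{anoac-1} gives $(\nabla\phi_i)^T\DK\nabla\phi_j\le 0$. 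Thus $a_{ij}\le 0$ for every $i\ne j$, i.e.\ $A$ is a $Z$-matrix.

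Next I would compute the row sums. The nodal basis functions form a partition of unity, $\sum_{j=1}^{N_v}\phi_j\equiv 1$ on $\Omega$, so $\sum_{j=1}^{N_v}\nabla\phi_j\equiv 0$. For an interior row this gives $\sum_{j=1}^{N_v}a_{ij}=\int_\Omega(\nabla\phi_i)^T\D\big(\sum_j\nabla\phi_j\big)\,d\V{x}=0$, while for a boundary row $\sum_j a_{ij}=1$. Hence every row sum equals $0$ or $1$ and is nonnegative, which already establishes the second assertion of the lemma. Combined with the $Z$-pattern this yields $a_{ii}=\sum_{j=1}^{N_v}a_{ij}+\sum_{j\ne i}|a_{ij}|\ge\sum_{j\ne i}|a_{ij}|$, so $A$ is weakly diagonally dominant; moreover $a_{ii}>0$ for every $i$ (for boundary $i$ because $a_{ii}=1$, and for interior $i$ because $\DK$ is positive definite and $\nabla\phi_i$ is nonzero on at least one element incident to $\V{x}_i$). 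A Gershgorin argument then shows all eigenvalues of $A$ have nonnegative real part, which is already the $M$-matrix property in the general sense.

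To upgrade this to $A$ being a \emph{nonsingular} $M$-matrix (so that $A^{-1}\ge 0$), I would exploit the block structure produced by ordering the interior vertices first: by \eqref{matA},
\[
A=\begin{pmatrix} A_{11} & A_{12}\\ 0 & I\end{pmatrix},
\]
with $A_{11}$ the $N_{vi}\times N_{vi}$ interior--interior block, $A_{12}\le 0$ entrywise, and $I$ the identity on the boundary indices. Since the lower-right block is $I$, one has $\det A=\det A_{11}$ and $A^{-1}=\left(\begin{smallmatrix} A_{11}^{-1} & -A_{11}^{-1}A_{12}\\ 0 & I\end{smallmatrix}\right)$, which is entrywise nonnegative as soon as $A_{11}^{-1}\ge 0$ (because $-A_{12}\ge 0$). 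So it suffices to show $A_{11}$ is a nonsingular $M$-matrix. It is again a $Z$-matrix, and its row sums are $\sum_{j\le N_{vi}}a_{ij}=-\sum_{j>N_{vi}}a_{ij}\ge 0$, strictly positive for those interior vertices coupled to the boundary. Since $\Omega$ is connected and the triangulation conforming, the vertex-adjacency graph is connected, so every interior vertex is joined by a chain of interior vertices to one adjacent to the boundary; this makes $A_{11}$ weakly chained diagonally dominant, and the classical criterion for diagonally dominant $Z$-matrices gives that $A_{11}$ is a nonsingular $M$-matrix, hence so is $A$. The step needing the most care is precisely this last one: one must ensure the chains terminate at a vertex that is \emph{strictly} row-dominant in $A_{11}$, which can be delicate if some interior--boundary couplings vanish (a right angle in the $\DK$ metric); apart from that point, the conclusion reduces to standard $M$-matrix theory.
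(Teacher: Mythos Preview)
The paper does not supply a proof of this lemma; it simply quotes the result from \cite{LH10}. Your write-up is therefore not competing against an argument in the present paper but rather filling in what the authors import from elsewhere, and your outline is essentially the standard route: ANOAC forces the $Z$-sign pattern on the off-diagonals, the partition-of-unity identity $\sum_j\nabla\phi_j\equiv 0$ makes the interior row sums vanish (boundary rows sum to $1$), and the block-triangular structure reduces the $M$-matrix question to the interior block $A_{11}$, which you then handle via a weakly-chained diagonally dominant argument driven by connectedness of the triangulation.

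The one point you yourself flag is genuine and worth stating more sharply. Under ANOAC alone (nonobtuse, right angles permitted in the metric $\DK^{-1}$), individual couplings $a_{ij}$ can vanish. Your WCDD argument needs, for each interior vertex, a chain through \emph{nonzero} interior--interior off-diagonals ending at an interior vertex with a \emph{strictly} negative interior--boundary coupling. If every interior--boundary coupling were zero one would have $A_{11}\V{e}=0$ and $A$ would be singular, so the lemma as stated implicitly rules such degenerate configurations out; in practice this is handled (as in \cite{LH10}) either by an irreducible diagonal dominance argument together with the observation that at least one interior--boundary coupling is strictly negative, or by assuming an \emph{acute} (not merely nonobtuse) condition when nonsingularity is needed. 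Apart from this caveat, your argument is correct and matches the approach the paper later uses for $M+\Delta t_n A$ (there the mass term supplies the strict dominance that $A$ alone lacks).
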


A nonsingular matrix $A=(a_{ij})$ is said to be an $M$-matrix if (a) $a_{ii} > 0$ and $a_{ij} \le 0$ for $i\neq j$ and
(b) the entries of its inverse are nonnegative.
In our analysis we will use a sufficient condition for $M$-matrices (e.g., see Plemmons \cite{Ple77})
that requires $A$ to be a $Z$-matrix (i.e., $a_{ij} \le 0$) and be strictly diagonally dominant.

To conclude this subsection, we list a few facts that will be needed in our analysis. We have
(e.g., see Ciarlet \cite[Page 201]{Cia78})
\beq
\label{int-phi}
\int_{K} \phi_i \phi_j \, d\V{x} = \frac{|K|}{(d+1)(d+2)},
\qquad \int_{K} \phi_i^2 \, d\V{x} = \frac{2|K|}{(d+1)(d+2)}, \quad i \neq j.
\eeq
Moreover, it can be shown that
\begin{align}
& |K| = \frac{|\det(E)|}{d!}, \quad h_i = \frac{1}{\|\V{q}_i\|} = \frac{1}{\|\nabla \phi_i\|},
\label{h-1}
\\
& |\tilde{K}| = |K| \det(\DK)^{- \frac 1 2}, \quad \tilde{h}_i = \frac{1}{\|\tilde{\V{q}}_i\|} = \frac{1}{\|\nabla \phi_i\|_{\DK}},
\label{h-2}
\end{align}
where $h_i$ and $\tilde{h}_i$ are the distance/height from vertex $\V{x}_i$ to face $S_i$ in the Euclidean metric and
the metric $\DK^{-1}$, respectively.
Furthermore, we define
\beq
\label{Dacute}
\D_{acute} = \left (\frac{|\Omega|}{N_e}\right )^{\frac{2}{d}} \min\limits_{K \in \Th} \min\limits_{\substack{i, j = 0, ..., d\\ i \neq j}} (-(\nabla \phi_i)^T \DK \nabla \phi_j) .
\eeq
From (\ref{dihedral-M}) and (\ref{h-2}), we can rewrite this definition as 
\beq
\label{Dacute-1}
\D_{acute} = \left (\frac{|\Omega|}{N_e}\right )^{\frac{2}{d}}
\min\limits_{K \in \Th} \min\limits_{\substack{i, j = 0, ..., d\\ i \neq j}} 
\frac{\cos (\tilde{\alpha}_{ij})}{\tilde{h}_i \tilde{h}_j} .
\eeq
Here, the factor, $(|\Omega|/N_e)^{\frac{2}{d}}$, which is the squared average element size,
has been added to make the quantity dimensionless.
Using this definition, we can state ANOAC (\ref{anoac-1}) as $\D_{acute} \ge 0$
and the anisotropic {\em acute} angle condition (AAAC) as $\D_{acute} > 0$. 

Consider a family of {\em uniformly acute} meshes $\{\Th\}$ that satisfies
\begin{equation}
0 < \tilde{\alpha}_{ij} \le \tilde\alpha <\frac{\pi}{2}, \quad i, j = 0, ..., d,\quad  i \neq j, \quad \forall K \in \Th,
\quad \forall \Th \in \{ \Th \}
\label{uniformly-acute}
\end{equation}
for some constant $ \tilde\alpha$. For these meshes, we have
\beq
\label{Dacute-2}
\D_{acute} \ge \left (\frac{|\Omega|}{N_e}\right )^{\frac{2}{d}} \frac{\cos( \tilde\alpha)}{\tilde{h}^2} > 0,
\eeq
where $\tilde{h}$ is the maximal element diameter of $\Th$ in the metric $\D^{-1}$
and we have used the fact that $\tilde{h}_i \le \tilde{h}$. 
It is worth pointing out that, if the eigenvalues of $\D$ are bounded uniformly on $\Omega$
from below (away from zero) and above, we have $\tilde{h} = \mathcal{O}(h)$.

\subsection{Explicit method (EM)}
\label{sec-EM}

In this case, the reaction term is calculated explicitly, viz.,
\[
\left. u_h f(u_h)\right |_{t_{n+1}} \approx u_h^{n} f(u_h^{n}).
\]
This gives rise to
\beq
\label{rhs-em}
\tilde{\V{b}}(\V{u}^n,\V{u}^{n+1}) = \V{b}(\V{u}^n).
\eeq
It can be rewritten as
\beq
\label{rhs-emb}
\tilde{\V{b}}(\V{u}^n,\V{u}^{n+1}) =  C(\V{u}^{n}) \, \V{u}^{n},
\eeq
where $C(\V{u}^{n})=(c_{ij})$ is given by
\beq
\label{emmatC}
c_{ij} = \begin{cases}
	\int_\Omega f(u_h^n) \phi_j \phi_i \, d\V{x}, & i=1, ..., N_{vi} \\
	0, & i=N_{vi}+1, ..., N_{v} 
\end{cases} , \quad j=1, ..., N_{v}.
\eeq
Substituting this into \eqref{fem-dt}, we have
\beq
(M + \Delta t_n A) \V{u}^{n+1} = (M + \Delta t_n \, C) \V{u}^{n} + \Delta t_n \, \V{g}^{n+1}.
\label{fem-em}
\eeq

For any function $v = v(\V{x})$, we introduce the positive-negative part decomposition
$v(\V{x}) = v^{+}(\V{x}) + v^{-}(\V{x})$, where $v^{+}(\V{x}) = \max\{ v(\V{x}), 0\}$ and
$v^{-}(\V{x}) = \min\{ v(\V{x}), 0\}$.

\begin{thm}
\label{thm-em}
Assume $u_0(\V{x}) \ge 0$ and $g(\V{x},t) \ge 0$. The scheme \eqref{fem-em} preserves the nonnegativity of the solution
of the continuous problem if the mesh satisfies ANOAC (\ref{anoac-1}), i.e., $\D_{acute} \ge 0$ and the time step satisfies
\beq
\label{condt-em}
\frac{\left (\frac{|\Omega|}{N_e}\right )^{\frac{2}{d}}}{(d+1)(d+2)\D_{acute}} \le \Delta t_n
\le \frac{1}{\max\limits_{\V{x} }|f^{-}(u_h^n)|}, \quad n = 0, 1, ...
\eeq
\end{thm}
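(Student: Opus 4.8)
The plan is to show that the update \eqref{fem-em} can be written so that $\V{u}^{n+1}$ is obtained from $\V{u}^{n}$ and $\V{g}^{n+1}$ by the action of matrices with the right sign structure, and then invoke induction on $n$. First I would recall the splitting $\V{b}(\V{u}^n) = C(\V{u}^n)\V{u}^n$ from \eqref{rhs-emb}--\eqref{emmatC} and examine the matrix $M + \Delta t_n C$ on the right-hand side of \eqref{fem-em}. Its entries involve $\int_K \phi_i\phi_j\,d\V{x}$ and $\int_K f(u_h^n)\phi_i\phi_j\,d\V{x}$, which by \eqref{int-phi} are all nonnegative when $\phi_i\phi_j \ge 0$ on each element. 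For the diagonal-type contribution we need $1 + \Delta t_n f(u_h^n(\V{x})) \ge 0$ for all $\V{x}$; writing $f = f^+ + f^-$ this is guaranteed precisely when $\Delta t_n \le 1/\max_{\V{x}}|f^{-}(u_h^n)|$, which is the upper bound in \eqref{condt-em}. Carrying out the element-wise bookkeeping — splitting the integrals over $K$ according to which pair $(i,j)$ is involved, using the two formulas in \eqref{int-phi}, and combining the $\phi_i^2$ term (weight $2$) with the off-diagonal $\phi_i\phi_j$ terms (weight $1$) against the row structure of $M$ — one shows that $M + \Delta t_n C$ has nonnegative entries, i.e. $(M + \Delta t_n C)\V{u}^n \ge 0$ whenever $\V{u}^n \ge 0$. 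This is where the lower bound on $\Delta t_n$ enters: the off-diagonal mass entries must be large enough relative to the (possibly negative) reaction contribution, and quantifying this via $\D_{acute}$ and \eqref{Dacute} produces exactly the constant $(|\Omega|/N_e)^{2/d}/((d+1)(d+2)\D_{acute})$.

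Next I would handle the left-hand side. The matrix $M + \Delta t_n A$ should be shown to be an $M$-matrix: by Lemma~\ref{lem-anoac}, ANOAC makes $A$ a $Z$-matrix with nonnegative row sums and in fact an $M$-matrix, while $M$ is a nonnegative diagonally structured matrix (on the interior block it is symmetric positive definite with nonnegative entries; on the boundary block it is the identity via \eqref{matA}). Adding $\Delta t_n A$ to $M$ keeps the off-diagonal entries nonpositive and strengthens diagonal dominance, so by the sufficient condition for $M$-matrices cited after Lemma~\ref{lem-anoac} (a strictly diagonally dominant $Z$-matrix is an $M$-matrix), $(M + \Delta t_n A)^{-1} \ge 0$. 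One should keep careful track of the boundary rows $i = N_{vi}+1,\dots,N_v$, where the equation reduces to $u_i^{n+1} = g(\V{x}_i,t_{n+1}) \ge 0$ by hypothesis; the block structure of $M$ and $A$ in \eqref{matM}--\eqref{matA} makes this decoupling clean.

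Finally, assembling the pieces: $\V{u}^{n+1} = (M + \Delta t_n A)^{-1}\left[(M + \Delta t_n C)\V{u}^n + \Delta t_n \V{g}^{n+1}\right]$, where the inverse matrix is entrywise nonnegative, the bracketed vector is nonnegative (nonnegativity of $(M+\Delta t_n C)\V{u}^n$ from the right-hand-side analysis, nonnegativity of $\V{g}^{n+1}$ from $g \ge 0$), hence $\V{u}^{n+1} \ge 0$. Starting the induction from $\V{u}^0 \ge 0$, which follows from $u_0 \ge 0$ and $g \ge 0$ (the interior nodal values of $u_h^0$ interpolate $u_0$ and the boundary values interpolate $g$), completes the argument. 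The main obstacle I anticipate is the quantitative element-by-element estimate establishing nonnegativity of $M + \Delta t_n C$: one must combine the exact local mass-matrix entries from \eqref{int-phi} with a lower bound on the off-diagonal stiffness-type quantities encoded in $\D_{acute}$, being careful that the worst case is a diagonal entry of the \emph{lumped-like} combination and that the factor $(d+1)(d+2)$ and the dimensionless normalization in \eqref{Dacute} are tracked consistently; everything else is sign-chasing through the block structure.
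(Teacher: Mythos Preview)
Your proposal has the right overall architecture --- show $M+\Delta t_n C \ge 0$, show $M+\Delta t_n A$ is an $M$-matrix, then induct --- but you have misidentified which half of \eqref{condt-em} is responsible for which matrix property, and this is a genuine gap.

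For $M+\Delta t_n C$: the entries are $\sum_{K}\int_K (1+\Delta t_n f(u_h^n))\phi_i\phi_j\,d\V{x}$, and since $\phi_i\phi_j\ge 0$ on each $K$, nonnegativity follows from $1+\Delta t_n f(u_h^n)\ge 0$, i.e.\ only the \emph{upper} bound $\Delta t_n \le 1/\max_{\V{x}}|f^{-}(u_h^n)|$. There is no role for $\D_{acute}$ here; the reaction matrix $C$ has nothing to do with the stiffness geometry.

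For $M+\Delta t_n A$: your claim that ``adding $\Delta t_n A$ to $M$ keeps the off-diagonal entries nonpositive'' is false as stated. The consistent (non-lumped) mass matrix $M$ has \emph{strictly positive} off-diagonal entries $m_{ij}=\sum_{K\in\omega_i\cap\omega_j}|K|/((d+1)(d+2))$ by \eqref{int-phi}. Under ANOAC the off-diagonals of $A$ are $\le 0$, but to force $m_{ij}+\Delta t_n a_{ij}\le 0$ you need $\Delta t_n$ \emph{large enough} that the stiffness contribution dominates the mass contribution. Quantifying this element-by-element via the definition of $\D_{acute}$ in \eqref{Dacute} is precisely what produces the \emph{lower} bound $\Delta t_n \ge (|\Omega|/N_e)^{2/d}/((d+1)(d+2)\D_{acute})$. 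This is the step you attributed to the wrong matrix. Once the $Z$-matrix property is in hand, strict diagonal dominance follows from $\sum_j(m_{ij}+\Delta t_n a_{ij})=\int_\Omega\phi_i\,d\V{x}>0$, and the $M$-matrix conclusion goes through.
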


\begin{proof}
We shall show that $M+\Delta t_n C$ is a non-negative matrix and $M+\Delta t_n A$ an $M$-matrix. 
The former is obvious for $i=N_{vi}+1, ..., N_{v}$ from the definitions of $M$ \eqref{matM} and $C$ \eqref{emmatC}.
So we only need to consider the situation with $i=1, ..., N_{vi}$ and $j=1, ..., N_{v}$.
Assume that $\V{u}^{n} \ge 0$ (in the component-wise sense).
From \eqref{matM} and \eqref{emmatC}, we have
\beq
\label{rhs-em2}
m_{ij}+\Delta t_n c_{ij} = \sum\limits_{K \in \Th} \int_K (1+\Delta t_n f(u_h^n)) \phi_j \phi_i \, d\V{x}.
\eeq
The right-hand side is guaranteed to be nonnegative if $1+\Delta t_n f(u_h^n) \ge 0$, which holds
when the right inequality of (\ref{condt-em}) is satisfied.

To show that $M+\Delta t_n A$ is an $M$-matrix, we show that it is a $Z$-matrix and strictly diagonally dominant.
For diagonal entries with $i = 1, ..., N_{vi}$, from \eqref{matM}, \eqref{matA} and \eqref{int-phi} we have
\bey
\nn
m_{ii}+\Delta t_n a_{ii} & = & \sum\limits_{K \in \Th} \int_K \phi_i^2 \, d\V{x}
+ \Delta t_n \sum\limits_{K \in \Th} |K| (\nabla \phi_i)^T \; \DK \; \nabla \phi_i \\
\nn
& = & \sum\limits_{K \in \omega_i} \int_K \phi_i^2 \, d\V{x}
+ \Delta t_n \sum\limits_{K \in \omega_i} |K| (\nabla \phi_i)^T \; \DK \; \nabla \phi_i \\
\nn
& \ge & \frac{2 |\omega_i|}{(d+1)(d+2)} > 0,
\eey
where $\omega_i$ is the patch of the elements containing $\V{x}_i$ as a vertex and
we have used the fact that $\DK$ is positive definite.
Similarly, for off-diagonal entries with $i = 1, ..., N_{vi}$, $j=1, ..., N_v$, $i \ne j$, we have
\bey
\label{EM-offdiag}
m_{ij} + \Delta t_n\, a_{ij} & = & 
\sum_{K \in \Th} \int_K \phi_j \phi_i \, d\V{x}
+ \Delta t_n \, \sum\limits_{K \in \Th} |K| (\nabla \phi_i)^T \; \DK \; \nabla \phi_j  \\
\nn
& = & 
\sum_{K \in \omega_i \cap \omega_j} \int_K \phi_j \phi_i \, d\V{x}
+ \Delta t_n \, \sum\limits_{K \in \omega_i \cap \omega_j} |K| (\nabla \phi_i)^T \; \DK \; \nabla \phi_j  \\
\nn
& = & \sum_{K \in \omega_i\cap \omega_j} |K| \left ( \frac{1}{(d+1)(d+2)} 
+ \Delta t_n \;(\nabla \phi_{i})^T \; \DK \; \nabla \phi_{j}  \right )
\\
\nn 
& \le & \sum_{K \in \omega_i\cap \omega_j} |K| \left ( \frac{1}{(d+1)(d+2)} 
- \Delta t_n  \left (\frac{|\Omega|}{N_e}\right )^{-\frac{2}{d}} \D_{acute} \right ) \le 0,
\eey
where we have used the definition of $\D_{acute}$ and the condition (\ref{condt-em}).
It is easy to check that $m_{ii} + \Delta t_n\, a_{ii} > 0$ and $m_{ij} + \Delta t_n\, a_{ij} \le 0$ ($i\neq j$)
for $i = N_{vi}+1, ..., N_v$. Thus, $M + \Delta t_n A$ is a $Z$-matrix.

The diagonal dominance follows from the fact that $M + \Delta t_n A$ is a $Z$-matrix
and that, for $i = 1,..., N_{vi}$,
\bey
\nn
\sum\limits_{j=1}^{N_{v}}(m_{ij} + \Delta t_n \, a_{ij}) & = & 
\int_{\Omega} \phi_i \sum_{j=1}^{N_v} \phi_j d \V{x}
+ \Delta t_n\, \int_{\Omega}(\nabla  \phi_i)^T \D  \sum_{j=1}^{N_v} \nabla \phi_j d \V{x}
\\
\nn
& = &  \int_{\Omega} \phi_i d \V{x} = \frac{|\omega_i|}{(d+1)(d+2)}  > 0 .
\eey
Thus, $M + \Delta t_n A$ is an $M$-matrix.

Using the assumptions $\V{g}^{n+1} \ge 0$ and $\V{u}^n \ge 0$ and the fact that
$(M + \Delta t_n C) \ge 0$ and $M + \Delta t_n A$ is an $M$-matrix, from (\ref{fem-em})
we know that $\V{u}^{n+1} \ge 0$. From the induction, the numerical solution stays nonnegative
for all time.
\end{proof}

\begin{rem}
\label{rem:3.1}
The upper bound of (\ref{condt-em}) does not impose a serious restriction on $\Delta t_n$. For example,
for the case with Nagumo's equation, we have $f(u) = (1-u) (u-a)$. Assuming that the numerical solution stays
in [0,1], we have $\max_{\V{x}} |f^{-}(u_h^n)| \le a$. Thus, the right condition is satisfied if $\Delta t_n \le 1/a$.
Moreover, the lower bound of  (\ref{condt-em}) becomes unbounded if $\D_{acute} = 0$,
which happens if there is a right dihedral angle. Thus, (\ref{condt-em}) effectively
requires $\D_{acute} > 0$, i.e., {\em the mesh be acute}. Furthermore, (\ref{condt-em}) implies that
\begin{equation}
(d+1)(d+2)\D_{acute} \ge \left (\frac{|\Omega|}{N_e}\right )^{\frac{2}{d}} \max\limits_{\V{x}}|f^{-}(u_h^n)| .
\label{condt-em2}
\end{equation}
If the mesh is uniformly acute (cf. (\ref{uniformly-acute})),
from (\ref{Dacute-2}) we know that (\ref{condt-em2}) and (\ref{condt-em}) essentially are
\[
\mathcal{O}(1) \ge \mathcal{O}(h^2),
\qquad \mathcal{O}(h^{2}) \le \Delta t_n \le \mathcal{O}(1),
\]
which can readily be satisfied when the mesh is sufficiently fine.
\qed
\end{rem}

\subsection{Implicit method (IM)}
\label{sec-IM}

Another straightforward treatment for the reaction term is to evaluate it fully implicitly, i.e., 
\[
\left. u_h f(u_h)\right |_{t_{n+1}} \approx u_h^{n+1} f(u_h^{n+1}).
\]
But this will require the solution of nonlinear algebraic systems. To avoid this, we can use
a linearization, for instance,
\[
u_h^{n+1} f(u_h^{n+1}) \approx u_h^{n+1} (f(u_h^n) + u_h^n f'(u_h^n)) - u_h^n f'(u_h^n) u_h^n.  
\]
This gives rise to 
\beq
\label{rhs-imb}
\tilde{\V{b}}(\V{u}^n, \V{u}^{n+1}) =  B \, \V{u}^{n+1} + C \, \V{u}^n,
\eeq
where $B=(b_{ij})$ and $C=(c_{ij})$ are given by 
\begin{align}
\label{immatB}
& b_{ij} = \begin{cases}
	\int_\Omega (f(u_h^n)+u_h^n f'(u_h^n)) \phi_j \phi_i \, d\V{x}, & i=1, ..., N_{vi} \\
	0, & i=N_{vi}+1, ..., N_{v} 
\end{cases} , \quad j=1, ..., N_{v}
\\
\label{immatC}
& c_{ij} = \begin{cases}
	-  \int_\Omega u_h^n f'(u_h^n) \phi_j \phi_i \, d\V{x}, & i=1, ..., N_{vi} \\
	0, & i=N_{vi}+1, ..., N_{v} 
\end{cases} , \quad j=1, ..., N_{v}.
\end{align}
Substituting \eqref{rhs-imb} into \eqref{fem-dt}, we have
\beq
\label{fem-im}
(M -\Delta t_n B + \Delta t_n A) \V{u}^{n+1} = (M + \Delta t_n \, C) \V{u}^{n} + \Delta t_n \, \V{g}^{n+1}.
\eeq

\begin{thm}
\label{thm-im}
Assume $u_0(\V{x}) \ge 0$ and $g(\V{x},t) \ge 0$. The scheme \eqref{fem-im} preserves the nonnegativity
of the solution if the mesh satisfies 
\begin{equation}
\D_{acute} > \left (\frac{|\Omega|}{N_e}\right )^{\frac{2}{d}}
\frac{\max\limits_{\V{x}} \left | \left (f(u_h^n)+u_h^n f'(u_h^n)\right)^{-}\right |}{(d+1)(d+2)} 
\label{condt-im0}
\end{equation}
and the time step satisfies
\begin{align}
& \frac{\left (\frac{|\Omega|}{N_e}\right )^{\frac{2}{d}}}{(d+1)(d+2) \D_{acute} 
- \left (\frac{|\Omega|}{N_e}\right )^{\frac{2}{d}} \max\limits_{\V{x}} \left | \left (f(u_h^n)+u_h^n f'(u_h^n)\right)^{-}\right | }
\nn
\\
& \qquad \qquad \le \Delta t_n < \frac{1}{\max\limits_{\V{x}} \{\left (u_h^n f'(u_h^n)\right)^{+}, \left (f(u_h^n)+u_h^n f'(u_h^n)\right)^{+} \} } .
\label{condt-im}
\end{align}
\end{thm}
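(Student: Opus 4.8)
The plan is to follow the template of the proof of Theorem~\ref{thm-em}. Rewrite \eqref{fem-im} as
\[
(M - \Delta t_n B + \Delta t_n A)\,\V{u}^{n+1} = (M + \Delta t_n C)\,\V{u}^{n} + \Delta t_n\,\V{g}^{n+1} .
\]
It then suffices to show that (i) $M + \Delta t_n C$ is nonnegative entrywise and (ii) $M - \Delta t_n B + \Delta t_n A$ is an $M$-matrix; assuming $\V{u}^n \ge 0$ and $\V{g}^{n+1} \ge 0$, these give $\V{u}^{n+1} \ge 0$, and the claim follows by induction, exactly as in Theorem~\ref{thm-em}.

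For (i): by \eqref{matM} and \eqref{immatC} the rows $i = N_{vi}+1,\dots,N_v$ vanish, while for $i = 1,\dots,N_{vi}$ one has $m_{ij} + \Delta t_n c_{ij} = \sum_{K\in\Th}\int_K\bigl(1 - \Delta t_n\,u_h^n f'(u_h^n)\bigr)\phi_j\phi_i\,d\V{x}$, which is nonnegative as soon as $1 - \Delta t_n\,u_h^n f'(u_h^n) \ge 0$ pointwise. Since $u_h^n f'(u_h^n) \le \max_{\V{x}}\bigl(u_h^n f'(u_h^n)\bigr)^{+}$ everywhere, this holds under the first alternative of the upper bound in \eqref{condt-im}.

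For (ii): I will check that $M - \Delta t_n B + \Delta t_n A$ is a strictly diagonally dominant $Z$-matrix, which, as in Theorem~\ref{thm-em}, suffices (Plemmons \cite{Ple77}). The second alternative of the upper bound in \eqref{condt-im}, together with $f(u_h^n)+u_h^n f'(u_h^n) \le \max_{\V{x}}\bigl(f(u_h^n)+u_h^n f'(u_h^n)\bigr)^{+}$, guarantees that $w(\V{x}) := 1 - \Delta t_n\bigl(f(u_h^n)+u_h^n f'(u_h^n)\bigr) > 0$ on $\Omega$. Then, for $i = 1,\dots,N_{vi}$, the diagonal entry $m_{ii} - \Delta t_n b_{ii} + \Delta t_n a_{ii} = \sum_{K\in\omega_i}\int_K w\,\phi_i^2\,d\V{x} + \Delta t_n\sum_{K\in\omega_i}|K|(\nabla\phi_i)^T\DK\nabla\phi_i > 0$ since $w > 0$ and $\DK$ is positive definite, and for $i = N_{vi}+1,\dots,N_v$ it equals $\Delta t_n > 0$. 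For an off-diagonal entry with $i = 1,\dots,N_{vi}$, $i \ne j$, I would write
\[
m_{ij} - \Delta t_n b_{ij} + \Delta t_n a_{ij} = \sum_{K\in\omega_i\cap\omega_j}\Bigl(\int_K w\,\phi_i\phi_j\,d\V{x} + \Delta t_n|K|(\nabla\phi_i)^T\DK\nabla\phi_j\Bigr),
\]
bound $w(\V{x}) \le 1 + \Delta t_n\max_{\V{x}}\bigl|(f(u_h^n)+u_h^n f'(u_h^n))^{-}\bigr|$, use $\phi_i\phi_j \ge 0$ with $\int_K\phi_i\phi_j\,d\V{x} = |K|/((d+1)(d+2))$, and apply $(\nabla\phi_i)^T\DK\nabla\phi_j \le -(|\Omega|/N_e)^{-2/d}\D_{acute}$ from the definition \eqref{Dacute}; this makes the right-hand side at most $\sum_{K\in\omega_i\cap\omega_j}|K|\bigl(\frac{1 + \Delta t_n\max_{\V{x}}|(f+uf')^{-}|}{(d+1)(d+2)} - \Delta t_n(|\Omega|/N_e)^{-2/d}\D_{acute}\bigr) \le 0$, the last step being exactly the lower bound of \eqref{condt-im}. (Off-diagonal entries for $i = N_{vi}+1,\dots,N_v$ vanish.) That lower bound is a nonempty constraint on $\Delta t_n > 0$ precisely when $(d+1)(d+2)\D_{acute} - (|\Omega|/N_e)^{2/d}\max_{\V{x}}|(f+uf')^{-}| > 0$, i.e. exactly under the mesh condition \eqref{condt-im0}. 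Finally, for a $Z$-matrix strict diagonal dominance is equivalent to all row sums being positive; for $i = 1,\dots,N_{vi}$ the row sum is $\sum_j(m_{ij} - \Delta t_n b_{ij} + \Delta t_n a_{ij}) = \int_\Omega w\,\phi_i\,d\V{x} > 0$ (using $\sum_j\phi_j \equiv 1$, $\sum_j\nabla\phi_j \equiv 0$, $\phi_i \ge 0$, $w > 0$), and for $i = N_{vi}+1,\dots,N_v$ it is $\Delta t_n > 0$.

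The main obstacle is bookkeeping rather than a new idea: one must keep straight which of the two competing terms in the upper bound of \eqref{condt-im} governs each sign requirement --- the $u_h^n f'(u_h^n)$ term for nonnegativity of $M + \Delta t_n C$, and the $f(u_h^n)+u_h^n f'(u_h^n)$ term for positivity of $w$ and hence of the diagonal and the row sums of $M - \Delta t_n B + \Delta t_n A$ --- and one must apply the correct one-sided bound on $w$: its positive lower bound $1 - \Delta t_n(f+uf')^{+}$ wherever strict positivity is needed, and its upper bound $1 + \Delta t_n|(f+uf')^{-}|$ wherever an off-diagonal entry must be shown nonpositive. Once both matrices have the required sign structure, closing the induction is verbatim the argument in Theorem~\ref{thm-em}.
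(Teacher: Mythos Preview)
Your proposal is correct and follows essentially the same approach as the paper's own proof: show $M+\Delta t_n C\ge 0$ via $1-\Delta t_n u_h^n f'(u_h^n)\ge 0$, and show $M-\Delta t_n B+\Delta t_n A$ is a strictly diagonally dominant $Z$-matrix by controlling the sign of $1-\Delta t_n(f(u_h^n)+u_h^n f'(u_h^n))$ from below (for the diagonal and the row sums) and from above (for the off-diagonal entries, combined with the definition of $\D_{acute}$). Your introduction of the auxiliary function $w$ and your explicit separation of which half of the upper bound in \eqref{condt-im} governs which sign requirement make the bookkeeping slightly more transparent than the paper's terse version, but the argument is the same.
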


\begin{proof}
The proof is similar to that for Theorem~\ref{thm-em}.
A sufficient condition for $M+\Delta t_n C \ge 0$ is
\begin{equation}
1 - \Delta t_n u_h^n f'(u_h^n) \ge 0 \quad \text{or} \quad
\Delta t_n \le \frac{1}{\max\limits_{\V{x}} \left (u_h^n f'(u_h^n)\right)^{+} }.
\label{condt-im1}
\end{equation}

For the matrix $M-\Delta t_n B+\Delta t_n A$ on the left-hand side, the diagonal entries are
\bey
\nn
&& m_{ii} - \Delta t_n \, b_{ii} +  \Delta t_n \,  a_{ii} \\
\nn
&&\qquad \qquad =  \sum_{K \in \omega_i\cap \omega_j}  \left( \int_K (1 - \Delta t_n [f(u_h^n)+u_h^n f'(u_h^n)] ) \phi_i^2 \, d\V{x}
+ \Delta t_n |K| \;(\nabla \phi_{i})^T \DK  \nabla \phi_{i} \right),
\eey
and a sufficient condition for them to be nonnegative is 
\beq
\label{condt-im3}
\Delta t_n \le \frac{1}{\max\limits_{\V{x}} \left (f(u_h^n)+u_h^n f'(u_h^n)\right)^{+} }.
\eeq
The off-diagonal entries are given by
\bey
\nn
&& m_{ij} - \Delta t_n \, b_{ij} + \Delta t_n \,  a_{ij}  \\
\nn
&&\qquad \qquad =  \sum_{K \in \omega_i\cap \omega_j}  \left( \int_K (1 - \Delta t_n [f(u_h^n)+ u_h^nf'(u_h^n)] ) \phi_{j} \phi_{i} \, d\V{x}
+ \Delta t_n |K| \;(\nabla \phi_{i})^T \DK  \nabla \phi_{j} \right).
\eey
A sufficient condition for them to be nonpositive is (\ref{condt-im0}) and 
\begin{align}
\Delta t_n  \ge \frac{\left (\frac{|\Omega|}{N_e}\right )^{\frac{2}{d}}}{(d+1)(d+2) \D_{acute} 
- \left (\frac{|\Omega|}{N_e}\right )^{\frac{2}{d}} \max\limits_{\V{x}} \left | \left (f(u_h^n)+u_h^n f'(u_h^n)\right)^{-}\right | }.
\label{condt-im4}
\end{align}
For the diagonal dominance, we have 
\beq
\label{im-dom}
\sum_{j=1}^{N_v} (m_{ij}  -  \Delta t_n \, b_{ij} + \Delta t   \,  a_{ij})  
=  \int_\Omega (1-\Delta t_n [f(u_h^n)+u_h^n f'(u_h^n)]  \phi_{i} \, d\V{x},
\eeq
which is strictly positive if
\beq
\label{condt-im5}
\Delta t_n < \frac{1}{\max\limits_{\V{x}} \left (f(u_h^n)+u_h^n f'(u_h^n)\right)^{+} }.
\eeq

Combining the above conditions we obtain (\ref{condt-im}).
\end{proof}

\begin{rem}
\label{rem:3.2}
The condition (\ref{condt-im}) is comparable with (\ref{condt-em}), which effectively requires the mesh to be
acute. The conditions (\ref{condt-im}) and (\ref{condt-im0}) implies that
\[
(d+1)(d+2) \D_{acute} > \left (\frac{|\Omega|}{N_e}\right )^{\frac{2}{d}}
\left ( \max\limits_{\V{x}} \left | f(u_h^n)+u_h^n f'(u_h^n)\right | + 
\max\limits_{\V{x}} \left (u_h^n f'(u_h^n)\right)^{+} \right ).
\]
When the mesh is uniformly acute, the above condition, (\ref{condt-im}), and (\ref{condt-im0}) will essentially
become
\[
\mathcal{O}(1) \ge \mathcal{O}(h^2),\quad
\mathcal{O}(h^{2}) \le \Delta t_n < \mathcal{O}(1), \quad
\mathcal{O}(1) \ge \mathcal{O}(h^2) .
\]
They can be met easily when the mesh is sufficiently fine.
\qed
\end{rem}

\subsection{Hybrid explicit-implicit method (HEIM) }
\label{sec-HEIM}

We now consider two hybrid approximations using $u_h^n$ and $u_h^{n+1}$ for the reaction
term $u_h f(u_h)$ in \eqref{vecb}. 

\subsubsection{HEIM I}
\label{sec-heim1}

In this case, the reaction term is approximated by
\beq
\label{rhs-heim1}
\left. u_h f(u_h)\right |_{t_{n+1}} \approx u_h^{n+1} f(u_h^n). 
\eeq
This gives
\beq
\label{rhs-heim1b}
\tilde{\V{b}}(\V{u}^n,\V{u}^{n+1}) = B \, \V{u}^{n+1},
\eeq
where $B=(b_{ij})$ is given by
\beq
\label{heim1matB}
b_{ij} = \begin{cases}
	\int_\Omega f(u_h^n) \phi_j \phi_i \, d\V{x}, & i=1, ..., N_{vi} \\
	0, & i=N_{vi}+1, ..., N_{v} 
\end{cases} , \quad j=1, ..., N_{v}.
\eeq
Substituting the above into \eqref{fem-dt}, we have
\beq
(M - \Delta t_n B + \Delta t_n A) \V{u}^{n+1} = M \V{u}^{n} + \Delta t_n \, \V{g}^{n+1}.
\label{fem-heim1}
\eeq

Similar to Theorem~\ref{thm-im} for the scheme (\ref{fem-im}), we have the following theorem.

\begin{thm}
\label{thm-heim1} 
Assume $u_0(\V{x}) \ge 0$ and $g(\V{x},t) \ge 0$. The scheme \eqref{fem-im} preserves the nonnegativity
of the solution if the mesh satisfies 
\begin{equation}
\D_{acute} > \left (\frac{|\Omega|}{N_e}\right )^{\frac{2}{d}} \frac{\max\limits_{\V{x}} \left | f^{-}(u_h^n)\right |}{(d+1)(d+2)} 
\label{condt-heim10}
\end{equation}
and the time step satisfies
\begin{align}
\frac{\left (\frac{|\Omega|}{N_e}\right )^{\frac{2}{d}}}{(d+1)(d+2) \D_{acute}
- \left (\frac{|\Omega|}{N_e}\right )^{\frac{2}{d}} \max\limits_{\V{x}} \left | f^{-}(u_h^n)\right | } \le
\Delta t_n < \frac{1}{\max\limits_{\V{x}} f^{+}(u_h^n) } .
\label{condt-heim1}
\end{align}
\end{thm}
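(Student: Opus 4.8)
The plan is to mimic the proof of Theorem~\ref{thm-im} almost verbatim, since the HEIM~I scheme \eqref{fem-heim1} has exactly the same structure as \eqref{fem-im} but with the simplifications $C = 0$ and with $f(u_h^n) + u_h^n f'(u_h^n)$ replaced throughout by $f(u_h^n)$. Concretely, I would argue by induction on $n$, assuming $\V{u}^n \ge 0$ and showing $\V{u}^{n+1} \ge 0$. Because the right-hand side of \eqref{fem-heim1} is $M \V{u}^n + \Delta t_n \V{g}^{n+1}$, and $M$ is a nonnegative matrix (its entries are $\int_K \phi_i \phi_j\, d\V{x} \ge 0$ by \eqref{int-phi}, with the trivial rows for $i > N_{vi}$), the right-hand side is automatically nonnegative given $\V{u}^n \ge 0$ and $\V{g}^{n+1} \ge 0$. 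So the only real work is to show that the system matrix $M - \Delta t_n B + \Delta t_n A$ is an $M$-matrix; then $\V{u}^{n+1} = (M - \Delta t_n B + \Delta t_n A)^{-1}(M\V{u}^n + \Delta t_n \V{g}^{n+1}) \ge 0$ follows immediately, and the base case $n=0$ uses $\V{u}^0 \ge 0$ from $u_0 \ge 0$.

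For the $M$-matrix property I would again verify that the matrix is a $Z$-matrix and strictly diagonally dominant, which suffices by the criterion cited after Lemma~\ref{lem-anoac}. For $i = 1,\dots,N_{vi}$ the diagonal entries are $\sum_{K \in \omega_i} \bigl(\int_K (1 - \Delta t_n f(u_h^n))\phi_i^2\, d\V{x} + \Delta t_n |K| (\nabla\phi_i)^T \DK \nabla\phi_i\bigr)$; the second term is strictly positive since $\DK$ is positive definite, and the first is nonnegative once $1 - \Delta t_n f(u_h^n) \ge 0$, i.e. once $\Delta t_n \le 1/\max_{\V{x}} f^{+}(u_h^n)$, which is implied by the upper bound in \eqref{condt-heim1}. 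For the off-diagonal entries with $i \ne j$, they equal $\sum_{K \in \omega_i \cap \omega_j}|K|\bigl(\tfrac{1 - \Delta t_n f(u_h^n)}{(d+1)(d+2)} + \Delta t_n (\nabla\phi_i)^T \DK \nabla\phi_j\bigr)$ using \eqref{int-phi}; bounding $(\nabla\phi_i)^T\DK\nabla\phi_j \le -(|\Omega|/N_e)^{-2/d}\D_{acute}$ via \eqref{Dacute} and $1 - \Delta t_n f(u_h^n) \le 1 + \Delta t_n |f^{-}(u_h^n)|$, nonpositivity of this bracket is exactly the statement that $\Delta t_n$ exceeds the lower bound in \eqref{condt-heim1}, and \eqref{condt-heim10} guarantees that this lower bound is positive and finite (the denominator $(d+1)(d+2)\D_{acute} - (|\Omega|/N_e)^{2/d}\max_{\V{x}}|f^{-}(u_h^n)|$ is positive precisely under \eqref{condt-heim10}). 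The rows with $i = N_{vi}+1,\dots,N_v$ are trivial since $b_{ij} = 0$ there and $M, A$ reduce to the identity on those rows.

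For strict diagonal dominance I would compute the row sums for $i = 1,\dots,N_{vi}$ using $\sum_{j=1}^{N_v}\phi_j \equiv 1$ and $\sum_{j=1}^{N_v}\nabla\phi_j \equiv 0$ on each element, obtaining
\[
\sum_{j=1}^{N_v}(m_{ij} - \Delta t_n b_{ij} + \Delta t_n a_{ij}) = \int_\Omega (1 - \Delta t_n f(u_h^n))\phi_i\, d\V{x},
\]
which is strictly positive under $\Delta t_n < 1/\max_{\V{x}} f^{+}(u_h^n)$ (again implied by \eqref{condt-heim1}); combined with the $Z$-matrix property this yields strict diagonal dominance and hence the $M$-matrix conclusion. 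I do not anticipate a genuine obstacle here — the argument is a routine specialization of Theorem~\ref{thm-im}. The only point requiring slight care is bookkeeping the direction of the inequalities when splitting $1 - \Delta t_n f(u_h^n)$ via the positive/negative parts, making sure that the lower bound on $\Delta t_n$ handles the off-diagonal sign while the upper bound handles diagonal positivity and diagonal dominance, and confirming that \eqref{condt-heim10} is the exact consistency condition making the interval in \eqref{condt-heim1} nonempty (analogous to how \eqref{condt-im0} plays that role in Theorem~\ref{thm-im}).
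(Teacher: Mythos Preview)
Your proposal is correct and follows exactly the approach the paper intends: the paper gives no explicit proof for this theorem, simply stating that it is ``Similar to Theorem~\ref{thm-im},'' and your argument is precisely that specialization (with $C=0$ and $f(u_h^n)+u_h^n f'(u_h^n)$ replaced by $f(u_h^n)$). One small presentational point: in your off-diagonal computation you write the entries as $\sum_{K}|K|\bigl(\tfrac{1-\Delta t_n f(u_h^n)}{(d+1)(d+2)}+\cdots\bigr)$, but since $f(u_h^n)$ is not constant on $K$ this is not an equality; the correct statement is the inequality you obtain immediately afterward by bounding $1-\Delta t_n f(u_h^n)\le 1+\Delta t_n\max_{\V{x}}|f^{-}(u_h^n)|$ under the integral and then applying \eqref{int-phi}, which is exactly what the paper does in the analogous step of Theorem~\ref{thm-im}.
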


\begin{rem}
\label{rem3.3}
Conditions \eqref{condt-heim10} and \eqref{condt-heim1} imply 
\[
(d+1)(d+2) \D_{acute} >  \left (\frac{|\Omega|}{N_e}\right )^{\frac{2}{d}} \max\limits_{\V{x} \in \Omega} |f(u_h^n)|.
\]
This condition, \eqref{condt-heim10}, and \eqref{condt-heim1} can be met easily when the mesh is uniformly acute
and sufficiently fine.
\qed
\end{rem}

\subsubsection{HEIM II}
\label{sec-heim2}
In this case, the reaction term is approximated using the positive-negative part decomposition of $f(u_h^n)$, viz.,
\beq
\label{rhs-heim2}
\left. u_h f(u_h)\right |_{t_{n+1}} \approx u_h^{n+1} f^-(u_h^n) + u_h^{n} f^+(u_h^n).
\eeq
This approximation has been used by Qin et al. \cite{QDD15} for a finite difference solution of Nagumo's equation
(with $\D = I$). The approximation gives rise to
\beq
\label{rhs-heim2b}
\tilde{\V{b}}(\V{u}^n, \V{u}^{n+1}) = B \, \V{u}^{n+1} + C \, \V{u}^{n},
\eeq
where $B=(b_{ij})$ and $C=(c_{ij})$ are given by
\begin{align}
\label{matB2}
b_{ij} = \begin{cases}
	\int_\Omega f^{-}(u_h^n) \phi_j \phi_i \, d\V{x}, & i=1, ..., N_{vi} \\
	0, & i=N_{vi}+1, ..., N_{v} 
\end{cases} , \quad j=1, ..., N_{v}
\\
\label{matC}
c_{ij} = \begin{cases}
	\int_\Omega f^{+}(u_h^n) \phi_j \phi_i \, d\V{x}, & i=1, ..., N_{vi} \\
	0, & i=N_{vi}+1, ..., N_{v} 
\end{cases} , \quad j=1, ..., N_{v}.
\end{align}
Inserting \eqref{rhs-heim2b} into \eqref{fem-dt}, we have
\beq
(M -\Delta t_n B + \Delta t_n A) \V{u}^{n+1} = (M + \Delta t_n \, C) \V{u}^{n} + \Delta t_n \, \V{g}^{n+1}.
\label{fem-heim2}
\eeq
The matrix $M + \Delta t_n \, C$ is nonnegative. For the matrix $M -\Delta t_n B + \Delta t_n A$ on the left-hand side,
the diagonal entries are positive,
\[
m_{ii} - \Delta t_n \, b_{ii} +  \Delta t_n \,  a_{ii}  = \sum_{K \in \omega_i\cap \omega_j} 
\left( \int_K (1 - \Delta t_n f^{-}(u_h^n) ) \phi_i^2 \, d\V{x} + \Delta t_n |K| \;(\nabla \phi_{i})^T \DK
(\nabla \phi_{i} \right) > 0.
\]
The off-diagonal entries are given by
\[
m_{ij} - \Delta t_n \, b_{ij} + \Delta t_n \,  a_{ij}  =
\sum_{K \in \omega_i\cap \omega_j}  \left( \int_K (1 - \Delta t_n f^{-}(u_h^n) ) \phi_{j} \phi_{i} \, d\V{x}
+ \Delta t_n |K| \;(\nabla \phi_{i})^T \DK  \nabla \phi_{j} \right),
\]
which are nonpositive if
\[
\Delta t_n  \ge \frac{\left (\frac{|\Omega|}{N_e}\right )^{\frac{2}{d}}}{(d+1)(d+2) \D_{acute}
- \left (\frac{|\Omega|}{N_e}\right )^{\frac{2}{d}} \max\limits_{\V{x} \in \Omega} \left |f^-(u_h^n)\right |}.
\]
Similarly, for the diagonal dominance, we have 
\[
\sum_{j=1}^{N_v} (m_{ij}  -  \Delta t_n \, b_{ij} + \Delta t_n   \,  a_{ij})  
= \int_\Omega (1-\Delta t_n f^{-}(u_h^n) \phi_{i} \, d\V{x}
 >  0.
\]

Summarizing the above analysis, we obtain the following theorem.

\begin{thm}
\label{thm-heim2}
Assume $u_0(\V{x}) \ge 0$ and $g(\V{x},t) \ge 0$. The scheme \eqref{fem-heim2} preserves the nonnegativity
of the solution if the mesh satisfies
\begin{equation}
\D_{acute} > \left (\frac{|\Omega|}{N_e}\right )^{\frac{2}{d}} \frac{\max\limits_{\V{x}} \left | f^{-}(u_h^n)\right |}{(d+1)(d+2)} 
\label{condt-heim20}
\end{equation}
and the time step satisfies
\begin{align}
\Delta t_n  \ge \frac{\left (\frac{|\Omega|}{N_e}\right )^{\frac{2}{d}}}{(d+1)(d+2) \D_{acute}
- \left (\frac{|\Omega|}{N_e}\right )^{\frac{2}{d}} \max\limits_{\V{x} \in \Omega} \left |f^-(u_h^n)\right |} .
\label{condt-heim2}
\end{align}
\end{thm}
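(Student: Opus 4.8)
The plan is to mirror almost verbatim the structure of the proof of Theorem~\ref{thm-em}, since the scheme \eqref{fem-heim2} has exactly the same algebraic shape $(M-\Delta t_n B + \Delta t_n A)\V{u}^{n+1} = (M + \Delta t_n C)\V{u}^n + \Delta t_n \V{g}^{n+1}$ as the implicit and HEIM~I schemes. The key observation, already visible in the displays preceding the statement, is that here $B$ comes only from $f^{-}(u_h^n)\le 0$ and $C$ comes only from $f^{+}(u_h^n)\ge 0$, which makes the bookkeeping strictly easier than in Theorem~\ref{thm-im}: the right-hand-side matrix $M+\Delta t_n C$ is automatically nonnegative with no time-step restriction (each entry is $\sum_{K}\int_K (1+\Delta t_n f^{+}(u_h^n))\phi_j\phi_i\,d\V{x}\ge 0$), and the diagonal entries $m_{ii}-\Delta t_n b_{ii}+\Delta t_n a_{ii}$ are positive with no restriction either, because $-\Delta t_n f^{-}(u_h^n)\ge 0$ and $\DK$ is positive definite. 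So the only real constraint comes from forcing the off-diagonal entries of $M-\Delta t_n B + \Delta t_n A$ to be nonpositive and from securing strict diagonal dominance.

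First I would, as usual, dispose of the rows $i = N_{vi}+1,\dots,N_v$: there $m_{ij}=0$, $b_{ij}=c_{ij}=0$, and $a_{ij}=\delta_{ij}$, so the relevant block is the identity, which is trivially an $M$-matrix, and $(M+\Delta t_n C)$ has a zero block there, which is nonnegative. Then for $i=1,\dots,N_{vi}$ I would write out, exactly as in the excerpt's displays, the off-diagonal entry
\[
m_{ij}-\Delta t_n b_{ij}+\Delta t_n a_{ij}
= \sum_{K\in\omega_i\cap\omega_j}|K|\left(\frac{1-\Delta t_n f^{-}(u_h^n)|_K}{(d+1)(d+2)} + \Delta t_n (\nabla\phi_i)^T\DK\nabla\phi_j\right),
\]
where I am using \eqref{int-phi} to replace $\int_K\phi_j\phi_i\,d\V{x}$ by $|K|/((d+1)(d+2))$ (and I should note $f^{-}$ evaluated at $u_h^n$ is not constant on $K$, so I bound it by $\max_{\V{x}}|f^{-}(u_h^n)|$, which only enlarges the coefficient of $1/((d+1)(d+2))$). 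Bounding $(\nabla\phi_i)^T\DK\nabla\phi_j \le -(|\Omega|/N_e)^{-2/d}\D_{acute}$ via the definition \eqref{Dacute} of $\D_{acute}$, the bracket is $\le |K|\big((1+\Delta t_n\max_{\V{x}}|f^{-}(u_h^n)|)/((d+1)(d+2)) - \Delta t_n (|\Omega|/N_e)^{-2/d}\D_{acute}\big)$, and requiring this to be $\le 0$ is precisely the lower bound \eqref{condt-heim2}; the fact that the denominator $(d+1)(d+2)\D_{acute} - (|\Omega|/N_e)^{2/d}\max_{\V{x}}|f^{-}(u_h^n)|$ is positive is exactly the mesh condition \eqref{condt-heim20}. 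For the strict diagonal dominance I would use the partition-of-unity identities $\sum_j\phi_j\equiv 1$ and $\sum_j\nabla\phi_j\equiv 0$ to collapse the row sum to $\sum_j(m_{ij}-\Delta t_n b_{ij}+\Delta t_n a_{ij}) = \int_\Omega(1-\Delta t_n f^{-}(u_h^n))\phi_i\,d\V{x} > 0$, which holds unconditionally since $1-\Delta t_n f^{-}(u_h^n)\ge 1$. Together with the $Z$-matrix property this gives, via the Plemmons criterion cited after Lemma~\ref{lem-anoac}, that $M-\Delta t_n B+\Delta t_n A$ is an $M$-matrix, hence has nonnegative inverse.

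Finally I would close the induction: assuming $\V{u}^n\ge 0$ componentwise and $\V{g}^{n+1}\ge 0$, the right-hand side $(M+\Delta t_n C)\V{u}^n + \Delta t_n\V{g}^{n+1}$ is nonnegative, and multiplying \eqref{fem-heim2} on the left by the nonnegative matrix $(M-\Delta t_n B+\Delta t_n A)^{-1}$ yields $\V{u}^{n+1}\ge 0$; the base case $\V{u}^0\ge 0$ follows from $u_0\ge 0$ and $g\ge 0$ together with the nonnegativity of the nodal interpolant of nonnegative data. I do not anticipate a genuine obstacle here — the argument is a routine specialization of Theorems~\ref{thm-em} and~\ref{thm-im}. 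The only point requiring a little care, and the one I would flag as the "main" step, is the treatment of the non-constant factor $f^{-}(u_h^n)$ inside each element integral: because $u_h^n$ is piecewise linear, $f^{-}(u_h^n)$ varies over $K$, so the replacement of $\int_K f^{-}(u_h^n)\phi_j\phi_i\,d\V{x}$ by a constant times $\int_K\phi_j\phi_i\,d\V{x}$ must be done through the uniform bound $\max_{\V{x}}|f^{-}(u_h^n)|$ rather than an exact quadrature identity, and one must check that this one-sided bound pushes the off-diagonal estimate in the correct (conservative) direction — which it does, since $-\Delta t_n f^{-}(u_h^n)\le \Delta t_n\max_{\V{x}}|f^{-}(u_h^n)|$ enlarges a nonnegative term being added to a nonpositive term we want to keep nonpositive.
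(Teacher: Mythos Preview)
Your proposal is correct and follows essentially the same approach as the paper: the paper's proof (given in the displays immediately preceding the theorem) verifies exactly the same four facts you outline --- nonnegativity of $M+\Delta t_n C$ for free, positivity of the diagonal entries of $M-\Delta t_n B+\Delta t_n A$ for free, nonpositivity of the off-diagonal entries under \eqref{condt-heim20}--\eqref{condt-heim2}, and unconditional strict diagonal dominance from $1-\Delta t_n f^{-}(u_h^n)\ge 1$ --- and then invokes the $M$-matrix criterion. Your additional remark about bounding the non-constant factor $f^{-}(u_h^n)$ on $K$ by $\max_{\V{x}}|f^{-}(u_h^n)|$ makes explicit a step the paper leaves implicit, but otherwise the arguments coincide.
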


\begin{rem}
\label{rem3.4}
Conditions \eqref{condt-heim2} and \eqref{condt-heim20} are almost the same as \eqref{condt-heim1}
and \eqref{condt-heim10} except that $\Delta t_n$ has no upper bound for the current case.
They can be met when the mesh is uniformly acute and sufficiently fine.
\qed
\end{rem}

\subsection{Summary}

To summarize, we note that all approximations require that the mesh be at least acute
in the metric $\D^{-1}$ and the time step be bounded below and above (except HEIM II for which $\Delta t_n$
is bounded only below). When the mesh is uniformly acute in the metric $\D^{-1}$,
the conditions for $\Delta t_n$ essentially read as
\[
\mathcal{O}(h^2) \le \Delta t_n \le \mathcal{O}(1),
\]
which can be met when the mesh is sufficiently fine.

Moreover, by comparing the results in
Theorems~\ref{thm-em}, \ref{thm-im}, \ref{thm-heim1}, and \ref{thm-heim2}
in this section for Nagumo-type equations with Theorem 3.1 of \cite{LH13} for pure diffusion problems
we can see that the reaction term affects both the mesh and time step conditions. For the current situation
the mesh has to be at least acute in the metric $\D^{-1}$ and the time step is bounded below and above.
On the other hand, for pure diffusion problems it is only required that the mesh be nonobtuse
in the metric $\D^{-1}$ and the time step be bounded below.

\section{Lumping for the mass matrix and reaction term}
\label{sec-lumping}

In this section we consider the lumping technique for the mass matrix and the reaction term.
We first recall that the lumping technique is equivalent to approximating integrals with the nodal
numerical quadrature 
\[
\int_K v(\V{x}) d \V{x} \approx \frac{|K|}{d+1} \sum_{j=0}^d v(\V{x}_j),
\]
where $\V{x}_j$, $j = 0, ..., d$ denote the vertices of $K$.
Using this, the mass and reaction terms in (\ref{fem-form}) (with $v_h$ being replaced by $\phi_i$)
are approximated by
\begin{align*}
& \int_\Omega \frac{\partial u_h}{\partial t} \phi_i d \V{x}
= \sum_{K \in \Th} \int_K \frac{\partial u_h}{\partial t} \phi_i d \V{x}
\approx \frac{|\omega_i|}{d+1} \frac{d u_i}{d t} ,
\\
& \int_\Omega u_h f(u_h) \phi_i d \V{x}
= \sum_{K \in \Th} \int_K u_h f(u_h) \phi_i d \V{x}
\approx \frac{|\omega_i|}{d+1} u_i f(u_i) .
\end{align*}
Then the finite element equation with lumping is given by
\begin{equation}
\label{fem-lumping}
\bar{M} \frac{d \V{u}}{d t} + A \V{u} = \bar{\V{b}}(u_h) + \V{g},
\end{equation}
where $A$ and $\V{g}$ are given in (\ref{matA}) and (\ref{vecg}), respectively, and $\bar{M}$ (which is diagonal)
and $\bar{\V{b}}(u_h)$ are given by
\begin{align}
& \bar{m}_{ii} = \begin{cases} \frac{|\omega_i|}{d+1}, & \text{for} \quad i = 1, ..., N_{vi}
	\\ 0, & \text{for} \quad i = N_{vi} + 1, ..., N_v  \end{cases}
\label{matM-lumping}
\\
& \bar{b}_ i = \begin{cases} \frac{|\omega_i|}{d+1} u_i f(u_i) , & \text{for} \quad i = 1, ..., N_{vi}
	\\ 0, & \text{for} \quad i = N_{vi} + 1, ..., N_v .  \end{cases}
\label{vecb-lumping}
\end{align}
The backward Euler scheme for (\ref{fem-lumping}) is
\begin{equation}
\bar{M} \frac{\V{u}^{n+1}-\V{u}^{n}}{\Delta t_n} + A \V{u}^{n+1} = \tilde{\bar{\V{b}}}(\V{u}^n,\V{u}^{n+1}) + \V{g}^{n+1},
\label{fem-lumping-2}
\end{equation}
where $\tilde{\bar{\V{b}}}(\V{u}^n,\V{u}^{n+1})$ is an approximation to $\bar{\V{b}}(u_h)$ at $t= t_{n+1}$.
As in the previous section, we consider here four approximations to the reaction term.
Since the analysis is similar, we record below the sufficient conditions for the preservation of solution nonnegativity
without proof. 
\begin{itemize}
\item The \textbf{EM} approximation is
	\[
	\left. u_i f(u_i)\right |_{t_{n+1}} \approx u_i^n f(u_i^n) .
	\]
	The sufficient condition for the nonegativity preservation of the solution is that (a) the mesh satisfies
	ANOAC (\ref{anoac-1}) and (b) the time step satisfies
	\[
	\Delta t_n \le \frac{1}{\max_{i} \left | f^{-}(u_i^n)\right |} .
	\]
\item The \textbf{IM} approximation is
	\[
	\left. u_i f(u_i)\right |_{t_{n+1}} \approx u_i^{n+1} f(u_i^{n+1}) \approx u_i^{n+1} (f(u_i^n)+u_i^n f'(u_i^n))
	- u_i^n f'(u_i^n) u_i^n .
	\]
	The sufficient condition for the nonegativity preservation of the solution is that (a) the mesh satisfies
	ANOAC (\ref{anoac-1}) and (b) the time step satisfies
	\[
	\Delta t_n \le \frac{1}{\max_{i} \{ (f(u_i^n)+u_i^n f'(u_i^n))^{+}, (u_i^n f'(u_i^n))^{+}\}} .
	\]
\item The \textbf{HEIM I} approximation is
	\[
	\left. u_i f(u_i)\right |_{t_{n+1}} \approx u_i^{n+1} f(u_i^{n}) .
	\]
	The sufficient condition for the nonegativity preservation of the solution is that (a) the mesh satisfies
	ANOAC (\ref{anoac-1}) and (b) the time step satisfies
	\begin{equation}
	\Delta t_n < \frac{1}{\max_{i} f^{+}(u_i^n)} .
	\label{heim1-lumping}
	\end{equation}
\item The \textbf{HEIM II} approximation is
	\[
	\left. u_i f(u_i)\right |_{t_{n+1}} \approx u_i^{n+1} f^{-}(u_i^{n}) + u_i^{n} f^{+}(u_i^{n}) .
	\]
	The sufficient condition for the nonegativity preservation of the solution is that the mesh satisfies
	ANOAC (\ref{anoac-1}).

\end{itemize}

By comparing the above results with those in the previous section, we can see that the lumping of
the mass and reaction terms improves both the mesh and time step conditions. In the current situation,
it is sufficient to require that \textit{the mesh be nonobtuse} (instead of acute or uniformly acute). Moreover,
\textit{the time step does not need to be bounded below}. Once again, HEIM II gives the weakest condition,
which does not require the time step be bounded above either.

\section{Preservation of boundedness}
\label{sec-boundedness}

The analysis in the previous two sections can also be applied to preservation of solution boundedness. 
We take Nagumo's equation as an example. It has $f(u) = (1-u)(u-a)$ for $a \in (0,1)$ and
the upper bound $u \le 1$.

We first consider the explicit method (\ref{fem-em}). Notice that $u_h \le 1$ is equivalent to $v_h \ge 0$,
where $v_h = 1 - u_h$. Inserting $\V{u}^{n} = \V{e} - \V{v}^{n}$ and $\V{u}^{n+1} = \V{e} - \V{v}^{n+1}$
into (\ref{fem-em}), where $\V{e} = [1, ..., 1]^T$, and using the properties of matrices $A$ and $C$, we obtain
\begin{equation}
(M + \Delta t_n A) \V{v}^{n+1} = (M + \Delta t_n \tilde{C} ) \V{v}^{n} + \Delta t_n \tilde{\V{g}}^{n+1},
\label{v-1}
\end{equation}
where
\begin{align*}
& \tilde{c}_{i,j} = \begin{cases} 
 \int_\Omega \phi_i \phi_j u_h^n (a-u_h^n ) d \V{x}, & i=1, ..., N_{vi} \\
0, & i=N_{vi}+1, ..., N_{v} 
\end{cases}
j = 1, ..., N_v
\\
& \tilde{g}_i^{n+1} = \begin{cases}
	0, & i=1, ..., N_{vi} \\
	1-g(\V{x}_i,t_{n+1}), & i=N_{vi}+1, ..., N_{v} .
\end{cases}
\end{align*}
Like Theorem~\ref{thm-em}, we can show that $\V{v}^{n+1} \ge 0$ when $\V{v}^{n}\ge 0$,
$\tilde{\V{g}}^{n+1} \ge 0$, and
\[
\frac{\left (\frac{|\Omega|}{N_e}\right )^{\frac{2}{d}}}{(d+1)(d+2)\D_{acute}} \le \Delta t_n
\le \frac{1}{\max\limits_{\V{x} }|(u_h^n (a-u_h^n))^{-}|} .
\]
Combining this with Theorem~\ref{thm-em} and recalling that $\V{v}^{n+1} \ge 0$ implies $\V{u}^{n+1} \le 1$,
we obtain the following theorem.

\begin{thm}
\label{thm-em-2}
Assume $0\le u_0(\V{x})\le 1$ and $0 \le g(\V{x},t) \le 1$. The scheme \eqref{fem-em} for Nagumo's equation
preserves the nonnegativity and boundedness ($u \le 1$) of the solution
of the continuous problem if the mesh satisfies ANOAC (\ref{anoac-1}), i.e., $\D_{acute} \ge 0$,
and the time step satisfies
\beq
\label{condt-em-2}
\frac{\left (\frac{|\Omega|}{N_e}\right )^{\frac{2}{d}}}{(d+1)(d+2)\D_{acute}} \le \Delta t_n
\le \frac{1}{\max\limits_{\V{x} }\{|f^{-}(u_h^n)|,|(u_h^n (a-u_h^n))^{-}|\} }, \quad n = 0, 1, ...
\eeq
\end{thm}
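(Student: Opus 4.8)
The plan is to reduce the boundedness claim $u_h \le 1$ to a nonnegativity claim for the auxiliary variable $v_h = 1 - u_h$ and then invoke, essentially verbatim, the matrix analysis already carried out in the proof of Theorem~\ref{thm-em}. Writing the full vectors $\V{v}^n = \V{e} - \V{u}^n$ and $\V{v}^{n+1} = \V{e} - \V{u}^{n+1}$ with $\V{e} = [1,\dots,1]^T$ and substituting into \eqref{fem-em}, I would first show that the resulting system collapses to \eqref{v-1}. The algebraic facts needed here are that the interior rows of $A\V{e}$ vanish (since $\sum_{j} \nabla \phi_j \equiv 0$) and that the $M\V{e}$ contributions on the two sides cancel after using $\sum_j \phi_j \equiv 1$ (so that $v_h^n = \sum_j v_j^n \phi_j = 1 - u_h^n$ on all of $\Omega$); what then remains from the reaction term is $-\Delta t_n \int_\Omega f(u_h^n)\, u_h^n\, \phi_i\,d\V{x}$, which, using $f(u) = (1-u)(u-a)$, rewrites as $\Delta t_n \int_\Omega u_h^n(a-u_h^n)(1-u_h^n)\, \phi_i\,d\V{x}$, i.e.\ exactly the contribution of $\tilde{C}\V{v}^n$. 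On the boundary rows the transformed equation reduces to $v_i^{n+1} = 1 - g(\V{x}_i,t_{n+1})$, which is nonnegative by the hypothesis $g \le 1$, so $\tilde{\V{g}}^{n+1} \ge 0$.

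With \eqref{v-1} in hand, the second step is to observe that it has exactly the same structure as \eqref{fem-em}: the left-hand matrix is again $M + \Delta t_n A$, and the right-hand matrix is $M + \Delta t_n \tilde{C}$, whose entries are $\sum_{K \in \Th} \int_K (1 + \Delta t_n\, u_h^n(a-u_h^n))\,\phi_j\phi_i\, d\V{x}$. Thus the arguments in the proof of Theorem~\ref{thm-em} apply with no change: $M + \Delta t_n A$ is an $M$-matrix under ANOAC \eqref{anoac-1} together with the lower bound on $\Delta t_n$, and $M + \Delta t_n \tilde{C} \ge 0$ provided $1 + \Delta t_n\, u_h^n(a - u_h^n) \ge 0$, i.e.\ provided $\Delta t_n \le 1/\max_{\V{x}} |(u_h^n(a-u_h^n))^{-}|$. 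Hence $\V{v}^n \ge 0$ and $\tilde{\V{g}}^{n+1}\ge 0$ force $\V{v}^{n+1}\ge 0$; equivalently, $\V{u}^n \le \V{e}$ forces $\V{u}^{n+1}\le\V{e}$.

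The final step is to merge the two conclusions. Theorem~\ref{thm-em} already gives $\V{u}^{n+1}\ge 0$ from $\V{u}^n \ge 0$ under ANOAC and \eqref{condt-em}; the computation above gives $\V{u}^{n+1}\le 1$ from $\V{u}^n \le 1$ under ANOAC and the two bounds on $\Delta t_n$. Intersecting the upper bounds $\Delta t_n \le 1/\max_{\V{x}}|f^{-}(u_h^n)|$ and $\Delta t_n \le 1/\max_{\V{x}}|(u_h^n(a-u_h^n))^{-}|$ with the common lower bound produces exactly \eqref{condt-em-2}. A one-line induction on $n$, with base case supplied by $0\le u_0 \le 1$ and $0 \le g \le 1$, then yields $0 \le u_h^n \le 1$ for all $n$, so the scheme preserves both the nonnegativity and the bound $u\le 1$ that hold for the continuous problem.

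I expect the only genuinely delicate point to be the derivation of \eqref{v-1}: checking that after the substitution $u = 1 - v$ the nonlinear term reassembles into the stated $\tilde{C}\V{v}^n$ form and that no spurious boundary contributions survive. Once that identity is verified, every remaining step is a transcription of the already-established Theorem~\ref{thm-em}.
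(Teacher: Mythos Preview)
Your proposal is correct and follows essentially the same approach as the paper: substitute $\V{u}=\V{e}-\V{v}$ into \eqref{fem-em}, use $A\V{e}=0$ on interior rows and the identity $-f(u_h^n)u_h^n = u_h^n(a-u_h^n)(1-u_h^n)$ to obtain \eqref{v-1}, then invoke the $M$-matrix and nonnegativity arguments of Theorem~\ref{thm-em} verbatim with $\tilde{C}$ in place of $C$, and finally intersect the two upper bounds on $\Delta t_n$. Your write-up in fact supplies more of the algebraic detail behind \eqref{v-1} than the paper does.
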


By comparing Theorem~\ref{thm-em} and the above theorem we can see that the preservation
of both solution boundedness and nonnegativity imposes a stronger condition on the allowable
maximum time step but other conditions stay the same. Interestingly, this is also true for
other schemes. Indeed, using a similar analysis we can show that the time step condition
for the preservation of solution boundedness and nonnegativity for the implicit method (\ref{fem-im})
is 
\begin{align}
& \frac{\left (\frac{|\Omega|}{N_e}\right )^{\frac{2}{d}}}{(d+1)(d+2) \D_{acute} 
- \left (\frac{|\Omega|}{N_e}\right )^{\frac{2}{d}} \max\limits_{\V{x}} \left | \left (f(u_h^n)+u_h^n f'(u_h^n)\right)^{-}\right | }
\le \Delta t_n
\nn
\\
& \quad < \frac{1}{\max\limits_{\V{x}} \{\left (u_h^n f'(u_h^n)\right)^{+},
\left (f(u_h^n)+u_h^n f'(u_h^n)\right)^{+}, (u_h^n-a+u_h^nf'(u_h^n))^{+} \} } .
\label{condt-im-2}
\end{align}
The time step condition for the HEIM I method (\ref{fem-heim1}) is 
\begin{align}
\frac{\left (\frac{|\Omega|}{N_e}\right )^{\frac{2}{d}}}{(d+1)(d+2) \D_{acute}
- \left (\frac{|\Omega|}{N_e}\right )^{\frac{2}{d}} \max\limits_{\V{x}} \left | f^{-}(u_h^n)\right | } \le
\Delta t_n < \frac{1}{\max\limits_{\V{x}} \{ f^{+}(u_h^n), (u_h^n-a)^{+}\} } ,
\label{condt-heim1-2}
\end{align}
while that for the HEIM II method (\ref{fem-heim2}) is given by
\begin{align}
& \frac{\left (\frac{|\Omega|}{N_e}\right )^{\frac{2}{d}}}{(d+1)(d+2) \D_{acute}
- \left (\frac{|\Omega|}{N_e}\right )^{\frac{2}{d}} \max\limits_{\V{x}} \left |f^-(u_h^n)\right |} 
\notag \\
& \qquad \qquad \le \Delta t_n  \le \frac{1}{\max\limits_{\V{x}} ( (u_h^n-a)^{-} + u_h^n (u_h^n-a)^{+})^{+} } .
\label{condt-heim2-2}
\end{align}

The situations with lumping for the mass matrix and reaction term can also be analyzed similarly.
The results are omitted here to save space.

\section{Numerical results}
\label{sec-ex}

In this section we present three numerical examples in two dimensions to demonstrate the effectiveness
of the conditions for preservation of nonnegativity and boundedness for the Nagumo equation with $f = (1-u)(u-0.1)$.
We consider four different meshes shown in Fig.~\ref{meshes}, including meshIso, meshAcute,
mesh45, and mesh135. MeshIso is a Delaunay mesh generated using the free C++ code BAMG \cite{bamg},
meshAcute is generated by splitting each subsquare into 8 acute triangles \cite{BKK09},
mesh45 is generated by splitting each subsquare into two right triangles with the hypotenuse
aligning in the direction of 45 degree, and mesh135 is similar to mesh45 except the hypotenuse
is in the direction of 135 degree.  Results with and without lumping for the mass matrix and the reaction term are presented.

\begin{figure}[hbt]
\begin{center}
\begin{minipage}[b]{2in}
\includegraphics[width=2in]{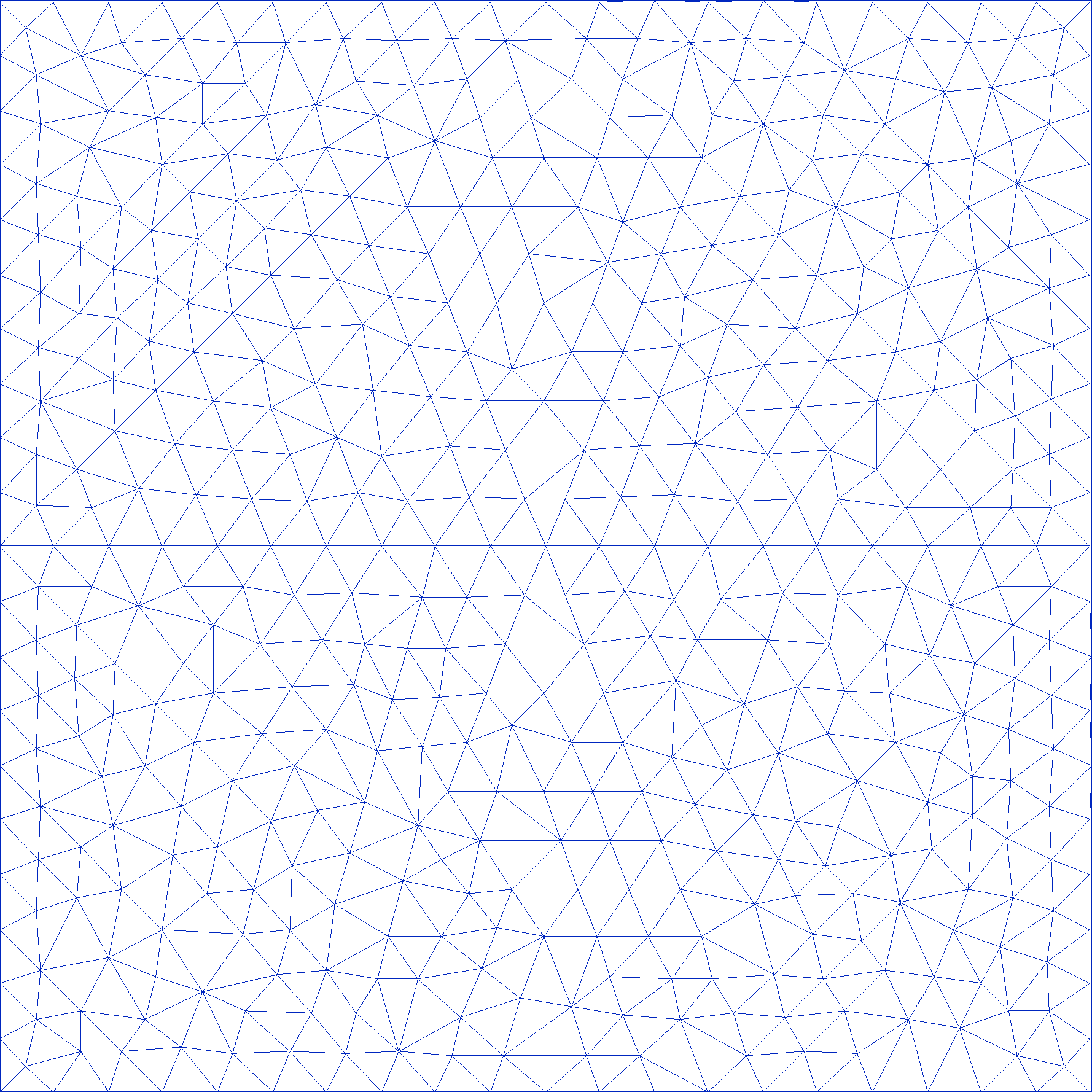}
\centerline{(a): meshIso, $N_e=964$}
\end{minipage}
\begin{minipage}[b]{2in}
\includegraphics[width=2in]{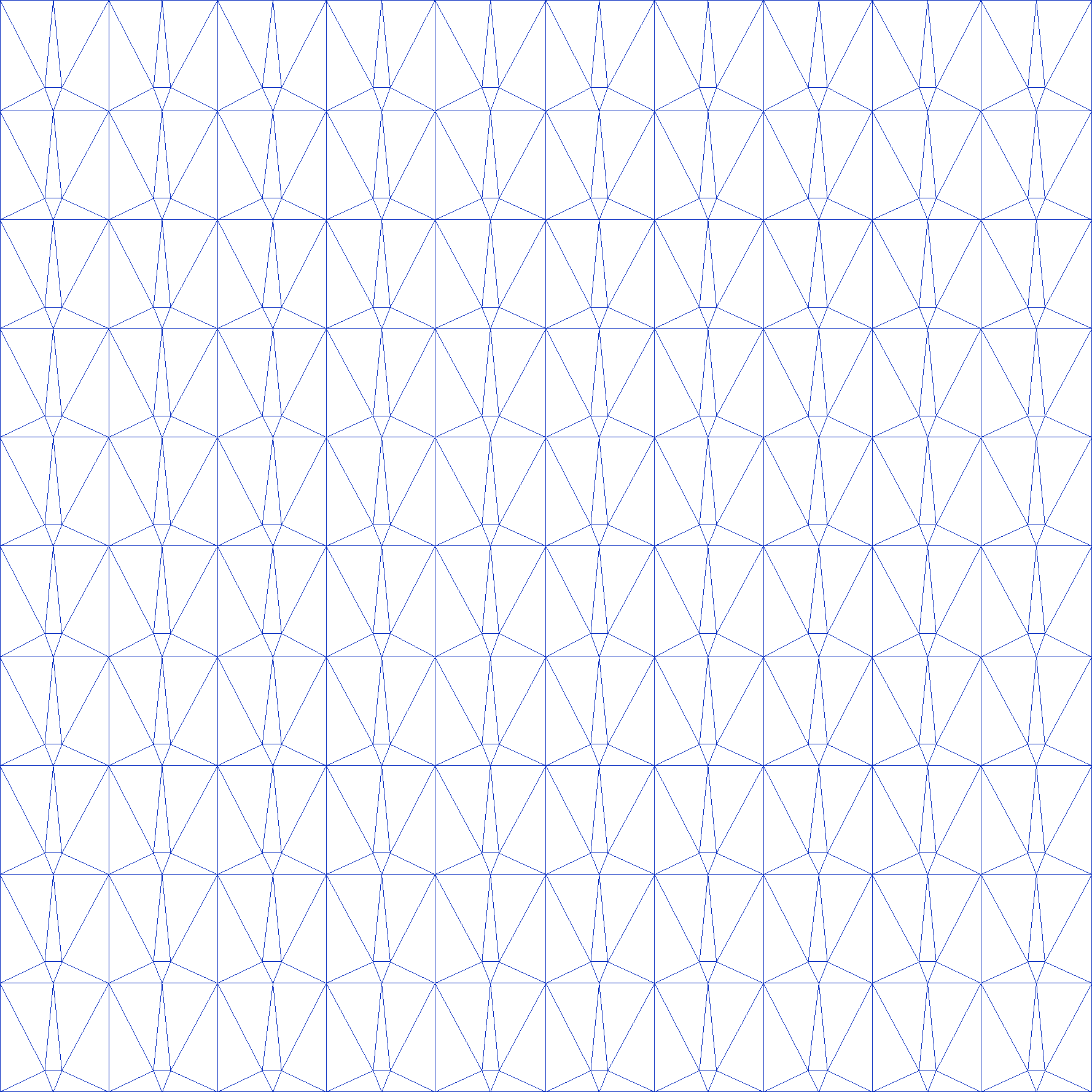}
\centerline{(b): meshAcute, $N_e=800$}
\end{minipage}
\\
\vspace{5mm}
\begin{minipage}[b]{2in}
\includegraphics[width=2in]{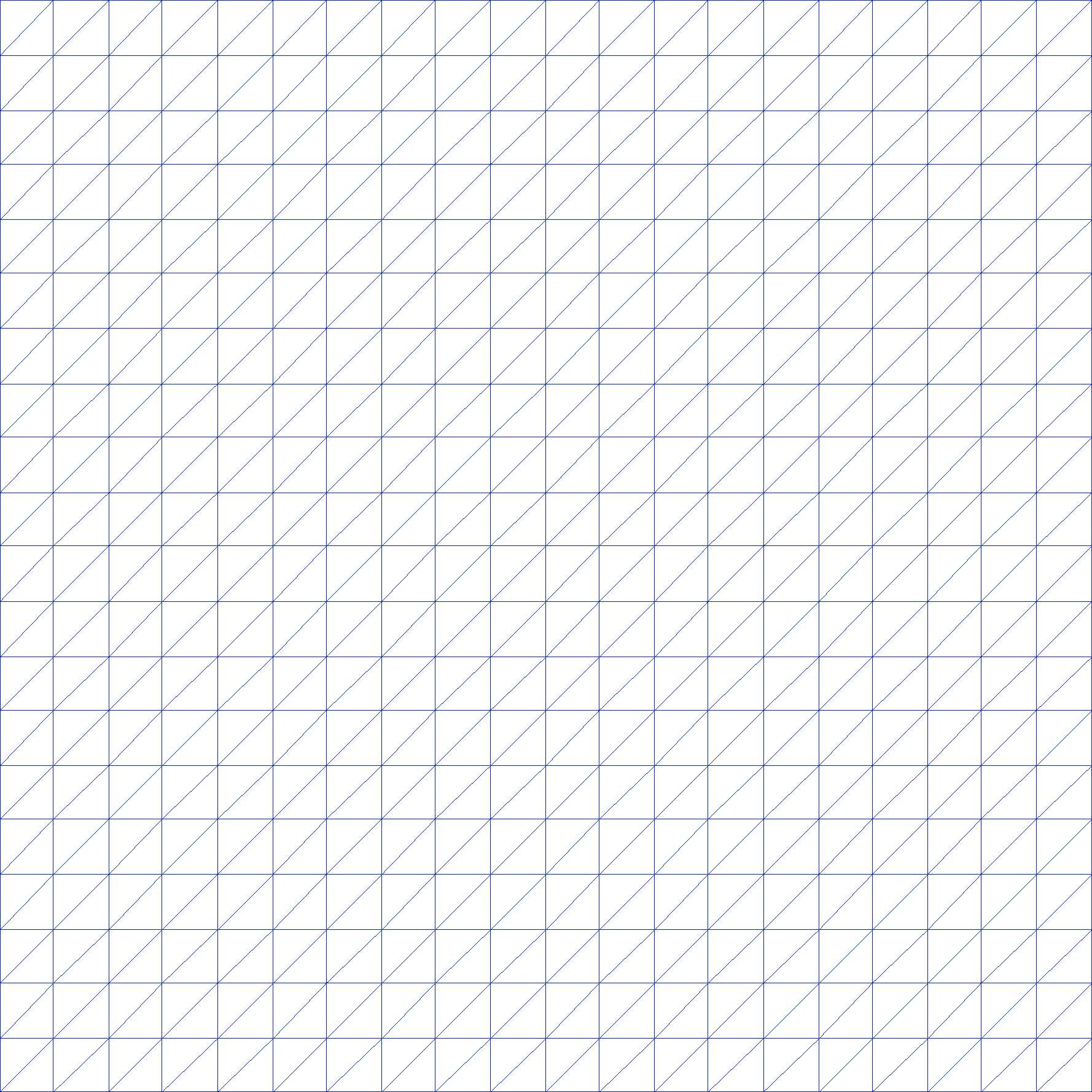}
\centerline{(c): mesh45, $N_e=800$}
\end{minipage}
\begin{minipage}[b]{2in}
\includegraphics[width=2in]{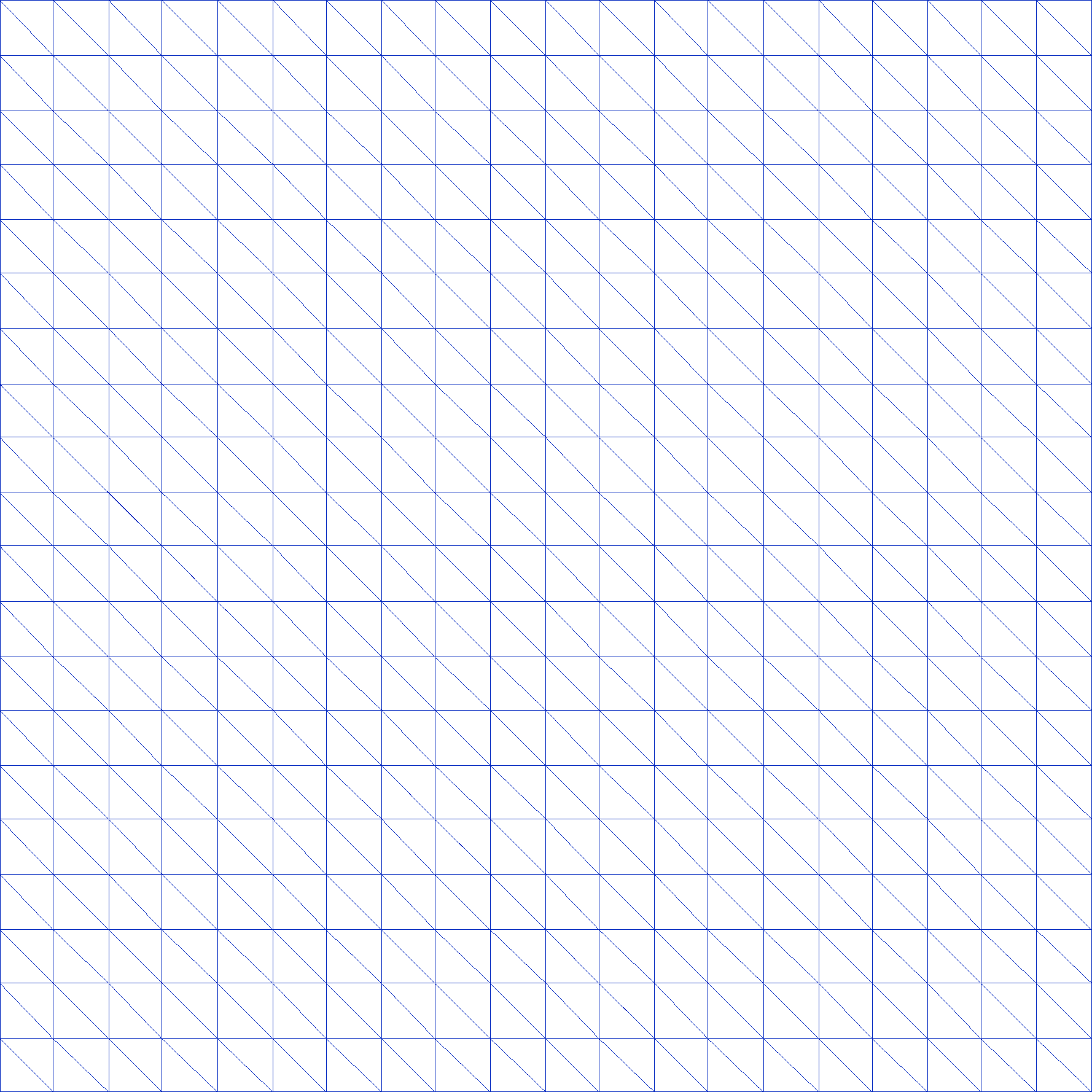}
\centerline{(d): mesh135, $N_e=800$}
\end{minipage}
\end{center}
\caption{Different meshes used in the numerical computations.}
\label{meshes}
\end{figure}     

The first example is an isotropic problem with a known exact solution.
The second example is anisotropic but homogeneous, that is, the diffusion matrix is constant but has different eigenvalues. The last example is anisotropic and inhomogeneous where the diffusion matrix not only has different eigenvalues and but also varies in location. For simplicity, we only consider the cases when the diffusion matrix is time independent.

\begin{exam}
\label{ex1}
The first example is in the form of \eqref{nagumo} with $\Omega=(-100,100)\times (-100,100)$, $\D=I$, 
and $g(x,y,t)$ and $u_0(x,y)$ are given such that the equation \eqref{nagumo} has the exact solution
\beq
\label{uexact}
u(x,y,t) = \frac{ e^{0.5(x+y)+(0.5-0.1)t} }{e^{0.5(x+y)+(0.5-0.1)t} + 2},
\eeq	
which satisfies $0 \le u \le 1$.

We first verify the convergence order of the schemes.
The $L^2$-norm of the error using the four methods (EM, IM, HEIM1, HEIM2) with mesh45
is scaled by the square root of the area of the domain. 
For the convergence in time, $T=10$ and a fixed, fine mesh with $N_e=2,000,000$ are used in the computations.
The results are shown in Fig.~\ref{ex1-errordtF}, which clearly show first order convergence in time. It is interesting
to point out that  HEIM I has the smallest error while HEIM II has the largest one.

For the convergence in space, $T=0.25$ and $\Delta t= 10^{-4}$ are used in the computations.
A small time step is used so that the temporal discretization error stays at a negligible level. The results are shown
in Fig.~\ref{ex1-errordxF}. They demonstrate a second order convergence in space (i.e., $\mathcal{O}(1/N_e)$).
One may notice that the error for the four approximations is indistinguishable
since they have similar spatial discretization errors. 

We remark that the schemes show a similar convergence behavior on other meshes.
The results are not presented here to save space. Moreover,
the diffusion for this example is isotropic and the ANOAC condition (that is, $D_{acute} \ge 0$) reduces to the conventional nonobtuse-angle condition. We have $D_{acute}=- 0.277$ for meshIso
shown in Fig.~\ref{meshes}(a) (which has obtuse triangles), $D_{acute}=0$ for both mesh45 and mesh135,
and $D_{acute}=0.0116$ for meshAcute. Thus, only meshAcute satisfies the mesh conditions
in Theorems~\ref{thm-em}, \ref{thm-im}, \ref{thm-heim1}, and \ref{thm-heim2} which essentially require the mesh
to be acute. Nevertheless, the numerical solutions obtained from all the four methods on all the meshes
do not violate the solution nonnegativitiy when the mesh is sufficiently fine. This is because those are
only sufficient conditions. The numerical solution is guaranteed to satisfy the nonnegativity when the conditions
are met and there is no such guarantee otherwise. The next example will demonstrate this.

\begin{figure}[hbt]
\centering
\includegraphics[width=2.5in]{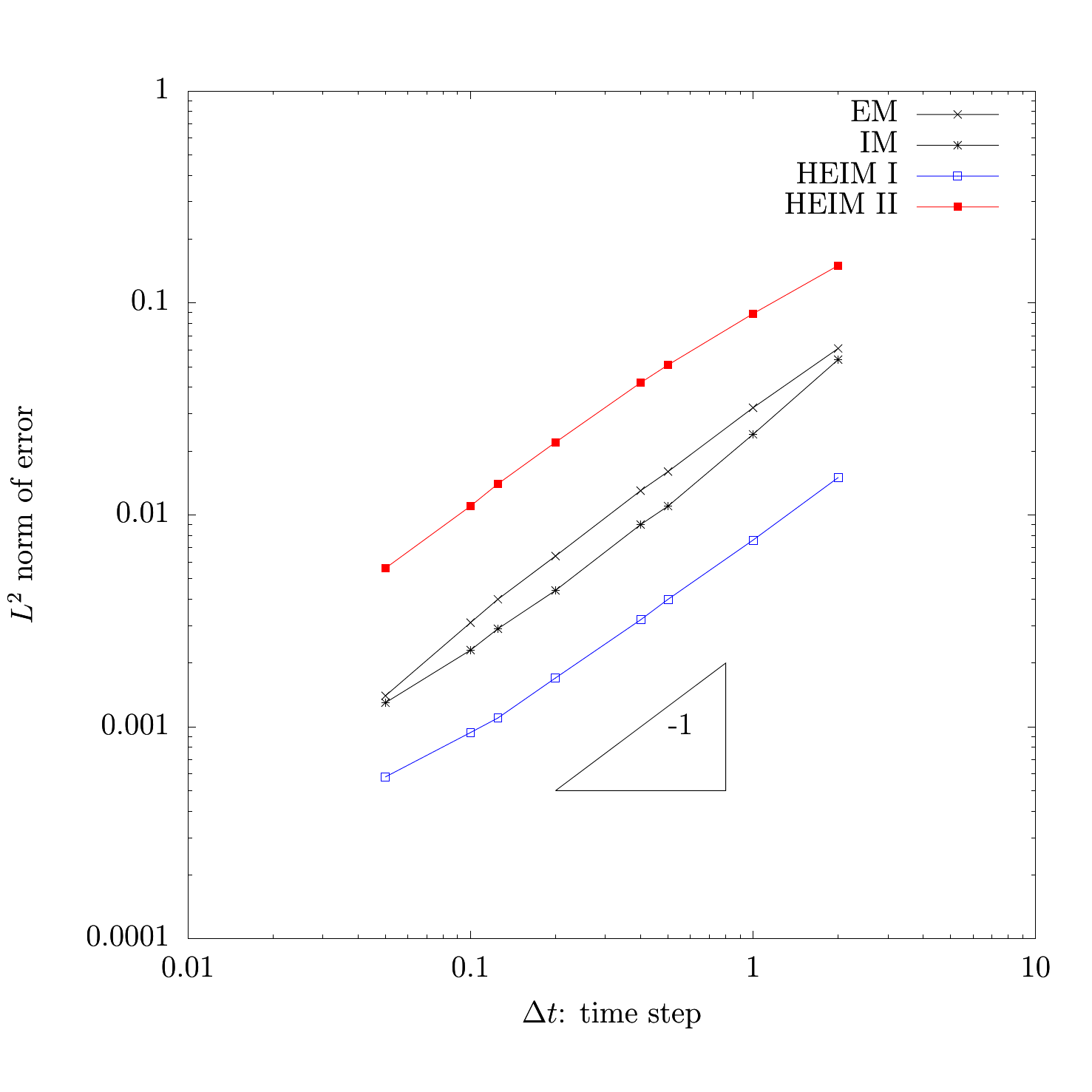}
\caption{Example \ref{ex1}. The $L^2$-norm of the error for different time steps using mesh45 with $N_e=2,000,000$ and $T=10.0$.}
\label{ex1-errordtF}
\end{figure}

\begin{figure}[hbt]
\centering
\includegraphics[width=2.5in]{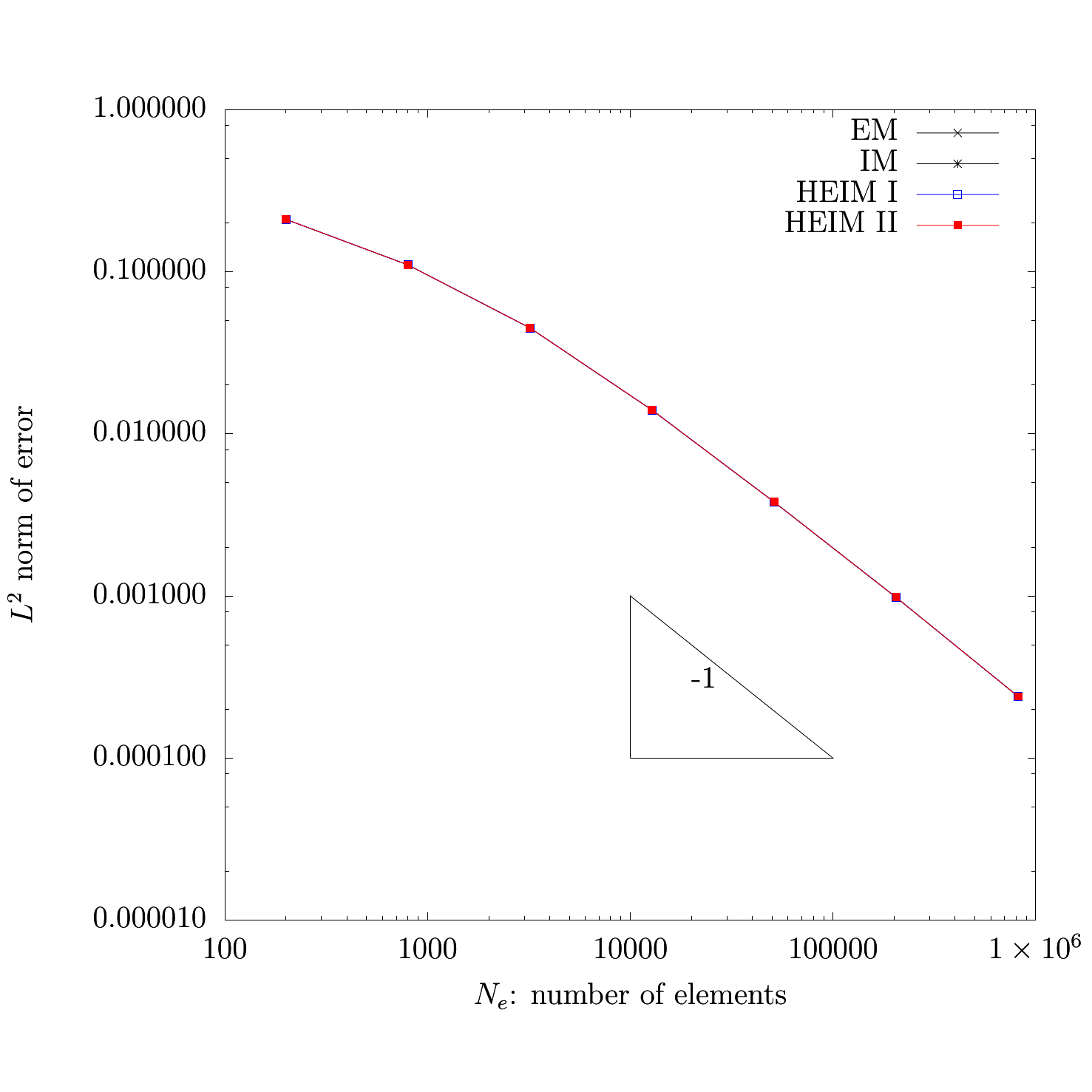}
\caption{Example \ref{ex1}. The $L^2$-norm of the error for different mesh sizes using mesh45 with $\Delta t=1.0 \times 10^{-4}$ and $T=0.25$.}
\label{ex1-errordxF}
\end{figure}

\end{exam}

\begin{exam}
\label{ex2}
This example has the same boundary and initial conditions as Example \ref{ex1} but 
its domain is $\Omega=(-100,100) \times (-170,170)$ and the diffusion matrix is anisotropic
and homogeneous (i.e., not changing with location),
\beq
\D = \frac{1}{4} \left[ \begin{array}{cc} 203 & 199 \sqrt{3} \\ 199 \sqrt{3} & 601 \end{array} \right].
\eeq
This anisotropic diffusion matrix has eigenvalues $\lambda_1=200$ and $\lambda_2=1$, and the eigenvector corresponding to the principal eigenvalue $\lambda_1$ is in the direction of $60$ degree. There is no exact solution available; however, the solution should lie in the interval $(0,1)$ according to the  preservation of nonnegativity and boundedness. 

Notice that the domain is no longer square. Indeed,  the hypotenuse side of each triangle element
in mesh45 is now in the direction of $59.5$ degree instead of $45$ degree as in the previous example.
For convenience and without confusion, we still denote the mesh as mesh45. The situation is similar for mesh135. 
Table~\ref{ex2-Dacute} shows the values of $D_{acute}$ and $D_{acute,ave}$ for the four meshes,
where $D_{acute,ave}$ is an averaged indicator of $D_{acute}$ defined as
\beq
\label{Dacute-ave}
\D_{acute,ave} = \frac{1}{N} \sum\limits_{K \in \Th} \left (\frac{|\Omega|}{N_e}\right )^{\frac{2}{d}}
\min\limits_{\substack{i, j = 0, ..., d\\ i \neq j}} (-(\nabla \phi_i)^T \DK \nabla \phi_j).
\eeq
As can be seen from Table~\ref{ex2-Dacute}, the values of $\D_{acute}$ are negative for all meshes
except mesh45. Thus, none of the meshIso, mesh135 and meshAcute satisfies the anisotropic
acute angle condition. On the other hand, the elements of mesh45 are closely aligned along the principal
diffusion direction and the positive $\D_{acute}$ implies that the mesh satisfies the anisotropic
acute angle condition.

The numerical solutions  at $T=40$ obtained with time step $\Delta t=0.1$ for the four meshes are shown in Fig.~\ref{ex2-soln}. 
Table \ref{ex2-DMP} lists the minimum and maximum values in the numerical solutions, denoted by $u_{min}$ and $u_{max}$, respectively. The numerical solutions from all meshes except mesh45 has negative values (undershoot) and values larger than 1 (overshoot), which violate the nonnegativity and upper boundedness. The regions of undershoot and overshoot are displayed in Fig.~\ref{ex2-soln} as empty white areas. Only the solutions obtained from mesh45 are guaranteed to stay within [0,1]. 

It is instructive to see that the upper bound of the time step is $\Delta t_{ub}=10$ in \eqref{condt-em} for EM method,
$\Delta t_{ub}=3.3$ in \eqref{condt-im} for IM, $\Delta t_{ub}=4.9$ in \eqref{condt-heim1} for HEIM I,
and $\Delta t_{ub}=\infty$ in \eqref{condt-heim2} for HEIM II (i.e., no upper bound). As mentioned in the previous
sections, they are not really a restriction in practical computation.

 The results using lumped mass and reaction terms are presented in Table \ref{ex2-lump}. With lumping technique, the results are close to those without lumping (Table \ref{ex2-DMP}) but with slight improvements.
For example, using HEIM II method, $u_{min}$ and $u_{max}$ are improved from $-0.0092$ and $1.0041$
without lumping to $-0.0088$ and $1.0035$ with lumping, respectively. 
Recall that the situations with lumping have less restrictive requirements on time step but the mesh condition is
almost the same as for those without lumping.

\begin{table}[hbt]
\caption{Measures for satisfaction of the anisotropic non-obtuse angle condition for Example \ref{ex2}.}
\vspace{2pt}
\centering
\begin{tabular}{c|cccc}
\hline \hline
Mesh & meshIso & mesh45 & mesh135 & meshAcute  \\
\hline \hline
 $N_e$ & 51,190 & 51,200 & 51,200 & 51,200 \\
\hline
 $\D_{acute}$ & -1.7e+2 & \textbf{5.3e-2} & -4.3e+1 & -2.0e+2 \\
\hline 
 $\D_{acute,ave}$ & -2.3e+1 & \textbf{5.3e-2} & -4.3e+1 & -5.8e+1 \\
\hline \hline
\end{tabular} \\
\label{ex2-Dacute}
\end{table}

\begin{figure}
\begin{center}
\begin{minipage}[b]{2in}
\includegraphics[width=2in]{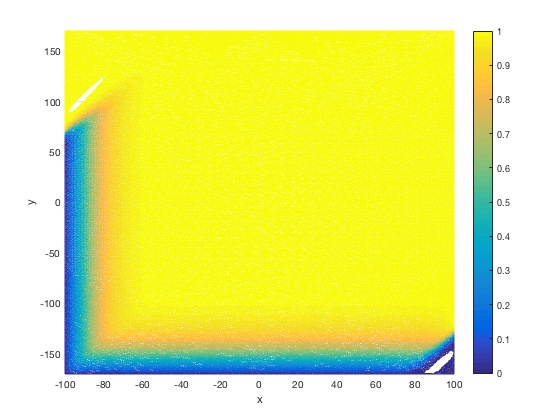}
\centerline{(a): meshIso, $N_e=51,190$}
\end{minipage}
\begin{minipage}[b]{2in}
\includegraphics[width=2in]{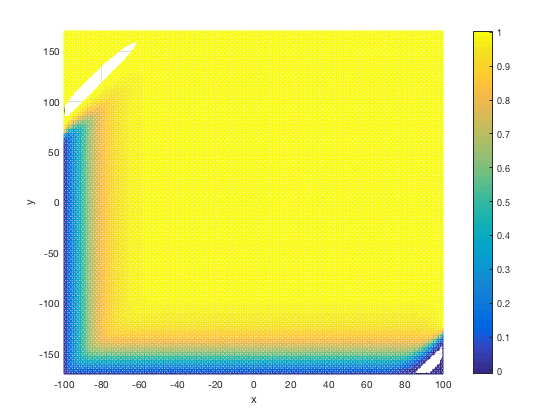}
\centerline{(b): meshAcute, $N_e=51,200$}
\end{minipage}
\\
\vspace{5mm}
\begin{minipage}[b]{2in}
\includegraphics[width=2in]{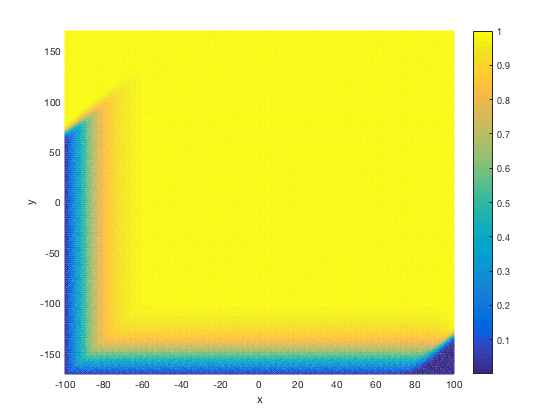}
\centerline{(c): mesh45, $N_e=51,200$}
\end{minipage}
\begin{minipage}[b]{2in}
\includegraphics[width=2in]{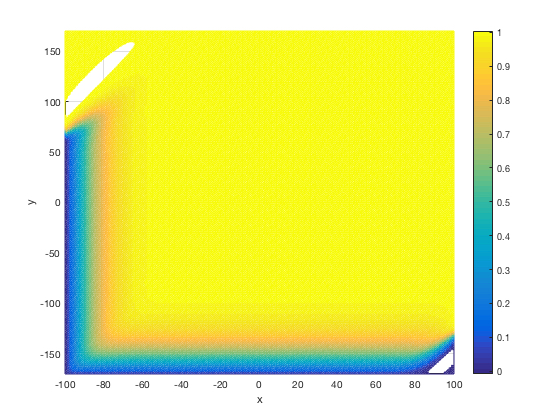}
\centerline{(d): mesh135, $N_e=51,200$}
\end{minipage}
\end{center}
\caption{Example \ref{ex2}. Filled contour plot of the numerical solutions at $T = 40$ obtained from different meshes with $\Delta t = 0.1$. The regions of undershoot and overshoot are displayed as empty white areas.}
\label{ex2-soln}
\end{figure}   

\begin{table}[hbt]
\caption{Numerical solutions obtained using different meshes and methods for Example \ref{ex2}.}
\vspace{2pt}
\centering
\begin{tabular}{cc|cccc}
\hline \hline
Mesh & solution & EM & IM & HEIM I & HEIM II  \\
\hline \hline
 meshIso & $u_{min}$ & -3.1e-3 & -3.1e-3 & -3.1e-3 & -3.1e-3 \\
 $N_e=51,190$ & $u_{max}$ & 1.0004 & 1.0003 & 1.0004 & 1.0003 \\
\hline
 mesh45 & $u_{min}$ & 0 & 0 & 0 & 0 \\
 $N_e=51,200$ & $u_{max}$ & 1 & 1 & 1 & 1 \\
\hline 
 mesh135 & $u_{min}$ & -8.1e-3 & -8.1e-3 & -8.1e-3 & -8.1e-3 \\
 $N_e=51,200$ & $u_{max}$ & 1.0041 & 1.0040 & 1.0040 & 1.0040 \\
\hline
 meshAcute & $u_{min}$ & -9.2e-3 & -9.3e-3 & -9.3e-3 & -9.2e-3 \\
 $N_e=51,200$ & $u_{max}$ & 1.0041 & 1.0040 & 1.0041 & 1.0040 \\
\hline \hline
\end{tabular} \\
\label{ex2-DMP}
\end{table}  

\begin{table}[hbt]
\caption{Numerical solutions obtained using different meshes and methods for Example \ref{ex2} with lumped mass and reaction terms.}
\vspace{2pt}
\centering
\begin{tabular}{cc|cccc}
\hline \hline
Mesh & solution & EM & IM & HEIM I & HEIM II  \\
\hline \hline
 meshIso & $u_{min}$ & -2.8e-3 & -2.9e-3 & -2.9e-3 & -2.8e-3 \\
 $N_e=51,190$ & $u_{max}$ & 1.0003 & 1.0002 & 1.0002 & 1.0002 \\
\hline
 mesh45 & $u_{min}$ & 0 & 0 & 0 & 0 \\
 $N_e=51,200$ & $u_{max}$ & 1 & 1 & 1 & 1 \\
\hline 
 mesh135 & $u_{min}$ & -8.0e-3 & -8.0e-3 & -8.0e-3 & -8.0e-3 \\
 $N_e=51,200$ & $u_{max}$ & 1.0037 & 1.0037 & 1.0037 & 1.0037 \\
\hline
 meshAcute & $u_{min}$ & -8.8e-3 & -8.8e-3 & -8.8e-3 & -8.8e-3 \\
 $N_e=51,200$ & $u_{max}$ & 1.0035 & 1.0034 & 1.0035 & 1.0034 \\
\hline \hline
\end{tabular} \\
\label{ex2-lump}
\end{table}

\end{exam}

\begin{exam}
\label{ex3}
In this example, we consider a diffusion matrix that is anisotropic and inhomogeneous (i.e., location dependent). Specifically, we choose the diffusion matrix in the form
\beq
\D = \left[ \begin{array}{ll} \cos \theta & -\sin \theta \\ \sin \theta & \cos \theta \end{array} \right]
	 \left[ \begin{array}{ll} 200 & 0 \\ 0 & 1 \end{array} \right] 	
	 \left[ \begin{array}{ll} \cos \theta & \sin \theta \\ -\sin \theta & \cos \theta \end{array} \right],
\eeq
where $\theta=\theta(x,y)$ is the angle of the tangential direction at point $(x,y)$ along concentric circles centered at (0,0).
The domain and the boundary and initial conditions are chosen the same as in Example \ref{ex1}.
 The solution also preserves nonnegativity and boundedness and should stay within $[0,1]$.

For this example, none of meshIso, mesh45, mesh135, and meshAcute satisfies the corresponding mesh
conditions, as can be seen from the values of $\D_{acute}$ in Table~\ref{ex3-Dacute}. For the purpose of comparison, another mesh, denoted by meshDMP, is generated according to the metric tensor $\MK=\DK^{-1}$ as
proposed in \cite{LH10}. 
The elements in meshDMP are made to be aligned along the principal diffusion direction as much as possible,
as shown in Fig.~\ref{ex3-meshDMP}. However, it is impossible to force every element in the square domain to be aligned
along the concentric direction. Some elements that are not aligned well will violate the mesh condition
and lead to negative $\D_{acute}$ value ($\D_{acute}=-1500$). On the other hand, $\D_{acute,ave}=1.1$,
which indicates that most elements in the mesh satisfy the mesh condition. In this sense, meshDMP can be
considered to closely satisfy the anisotropic acute angle condition for the given diffusion matrix. 

Table \ref{ex3-DMP} shows the minimum and maximum values of the numerical solutions obtained
from the five meshes. Both undershoot and overshoot are observed in the solutions obtained with
meshIso, mesh45, mesh135, and meshAcute whereas the solutions obtained from meshDMP stay within [0, 1]. 
The numerical solutions at $T=40$ obtained using different meshes and $\Delta t=0.1$ are shown
in Fig.~\ref{ex3-soln2} where the regions of undershoot and overshoot are displayed as empty white areas.
The numerical solution obtained from meshDMP with $N_e=51,146$ does not have undershoot and overshoot.
When the mesh is being refined, undershoot and overshoot vanish from the numerical solution
for meshIso with $N_e=591,506$ but are still present in the solutions obtained with much finer mesh45, mesh135, or meshAcute.

 The results with lumped mass and reaction terms are shown in Table \ref{ex3-lump}. Like Example \ref{ex2},
there are slight improvements by using lumping technique. One notable improvement is that $u_{min}$ for mesh135 is 0,
that is, no undershoot is observed for results obtained using mesh135 with lumped mass and reaction terms. 

\begin{table}[hbt]
\caption{Measures for the satisfaction of the anisotropic non-obtuse angle condition for Example \ref{ex3}.}
\vspace{2pt}
\centering
\begin{tabular}{c|ccccc}
\hline \hline
Mesh & meshIso & mesh45 & mesh135 & meshAcute  & meshDMP \\
\hline \hline
 $N_e$ & 51,910 & 51,200 & 51,200 & 51,200 & 51,146 \\
\hline
 $\D_{acute}$ & -1.2e+2 & -5.0e+1 & -5.0e+1 & -2.6e+2 & -1.5e+3 \\
\hline 
 $\D_{acute,ave}$ & -1.8e+1 & -2.3e+1 & -2.3e+1 & -5.0e+1 & \textbf{1.1} \\
\hline \hline
\end{tabular} \\
\label{ex3-Dacute}
\end{table}

\begin{figure}[hbt]
\centering
\includegraphics[width=2.5in]{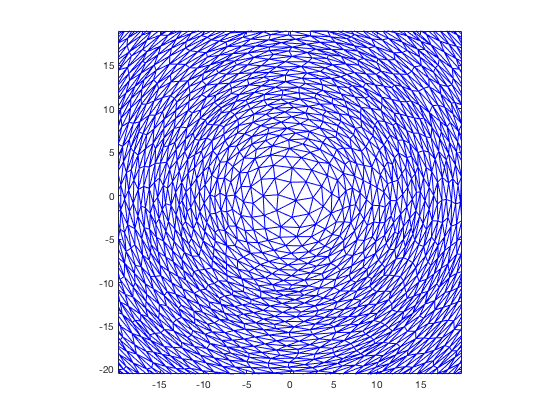}
\caption{Example \ref{ex3}. Scaleup view of meshDMP at (0,0).}
\label{ex3-meshDMP}
\end{figure} 

\begin{table}[hbt]
\caption{Numerical solutions obtained from different meshes and methods for Example \ref{ex3}.}
\vspace{2pt}
\centering
\begin{tabular}{cc|cccc}
\hline \hline
Mesh & solution & EM & IM & HEIM I & HEIM II  \\
\hline \hline
 meshIso & $u_{min}$ & -1.3e-3 & -1.2e-3 & -1.3e-3 & -1.4e-3 \\
 $N_e=51,910$ & $u_{max}$ & 1.0129 & 1.0129 & 1.0129 & 1.0129 \\
\hline
 mesh45 & $u_{min}$ & -1.4e-2 & -1.4e-2 & -1.4e-2 & -1.5e-2 \\
 $N_e=51,200$ & $u_{max}$ & 1.0101 & 1.0101 & 1.0101 & 1.0101 \\
\hline 
 mesh135 & $u_{min}$ & -4.4e-6 & -3.0e-6 & -3.8e-6 & -8.4e-6 \\
 $N_e=51,200$ & $u_{max}$ & 1.0082 & 1.0082 & 1.0082 & 1.0082 \\
\hline
 meshAcute & $u_{min}$ & -1.0e-2 & -1.0e-2 & -1.0e-2 & -1.1e-2 \\
 $N_e=51,200$ & $u_{max}$ & 1.0165 & 1.0165 & 1.0165 & 1.0164 \\
\hline
 meshDMP & $u_{min}$ & 0 & 0 & 0 & 0 \\
 $N=51,146$ & $u_{max}$ & 1 & 1 & 1 & 1 \\
\hline \hline
\end{tabular} \\
\label{ex3-DMP}
\end{table}  

\begin{figure}[hbt]
\begin{center}
\begin{minipage}[b]{2in}
\includegraphics[width=2in]{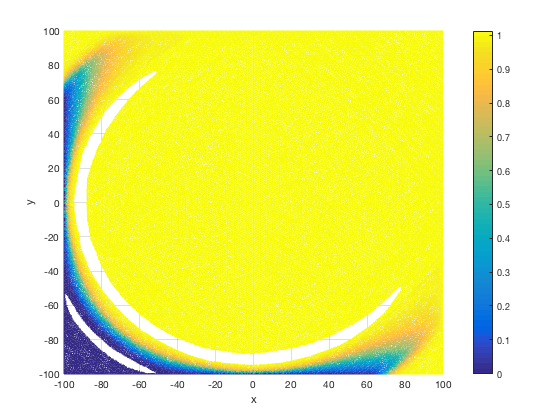}
\centerline{(a): meshIso, $N_e=51,910$}
\end{minipage}
\hspace{5mm}
\begin{minipage}[b]{2in}
\includegraphics[width=2in]{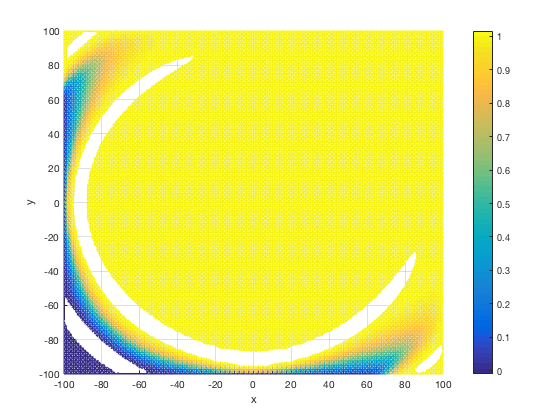}
\centerline{(b): meshAcute, $N_e=51,200$}
\end{minipage}
\\
\vspace{5mm}
\begin{minipage}[b]{2in}
\includegraphics[width=2in]{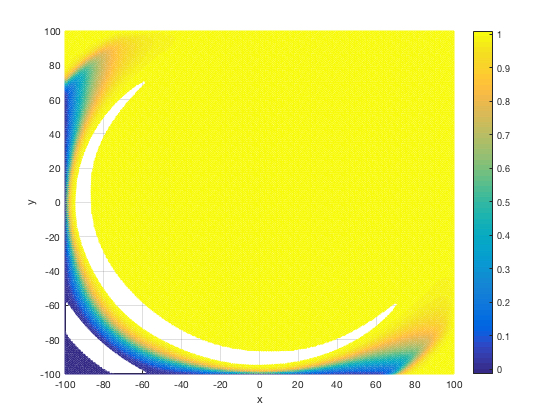}
\centerline{(c): mesh45, $N_e=51,200$}
\end{minipage}
\hspace{5mm}
\begin{minipage}[b]{2in}
\includegraphics[width=2in]{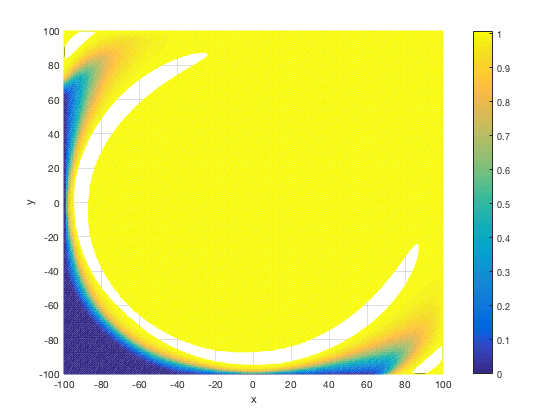}
\centerline{(d): mesh135, $N_e=51,200$}
\end{minipage}
\\
\vspace{5mm}
\begin{minipage}[b]{2in}
\includegraphics[width=2in]{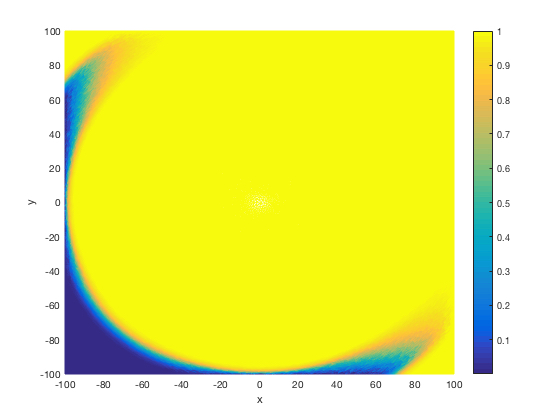}
\centerline{(e): meshDMP, $N_e=51,146$}
\end{minipage}
\end{center}
\caption{Example \ref{ex3}. Filled contour plot of the numerical solutions at $T = 40$ obtained from different meshes with $\Delta t = 0.1$.
The regions of  undershoot and overshoot are displayed as empty white areas.}
\label{ex3-soln2}
\end{figure} 

\begin{table}[hbt]
\caption{Numerical solutions obtained from different meshes and methods for Example \ref{ex3} with lumped mass and reaction terms.}
\vspace{2pt}
\centering
\begin{tabular}{cc|cccc}
\hline \hline
Mesh & solution & EM & IM & HEIM I & HEIM II  \\
\hline \hline
 meshIso & $u_{min}$ & -1.2e-3 & -1.2e-3 & -1.2e-3 & -1.2e-3 \\
 $N_e=51,910$ & $u_{max}$ & 1.0111 & 1.0111 & 1.0111 & 1.0111 \\
\hline
 mesh45 & $u_{min}$ & -1.5e-2 & -1.5e-2 & -1.5e-2 & -1.5e-2 \\
 $N_e=51,200$ & $u_{max}$ & 1.0087 & 1.0087 & 1.0087 & 1.0087 \\
\hline 
 mesh135 & $u_{min}$ & 0 & 0 & 0 & 0 \\
 $N_e=51,200$ & $u_{max}$ & 1.0068 & 1.0068 & 1.0068 & 1.0068 \\
\hline
 meshAcute & $u_{min}$ & -1.0e-2 & -1.0e-2 & -1.0e-2 & -1.1e-2 \\
 $N_e=51,200$ & $u_{max}$ & 1.0115 & 1.0115 & 1.0115 & 1.0115 \\
\hline
 meshDMP & $u_{min}$ & 0 & 0 & 0 & 0 \\
 $N=51,146$ & $u_{max}$ & 1 & 1 & 1 & 1 \\
\hline \hline
\end{tabular} \\
\label{ex3-lump}
\end{table}  

\end{exam}

\section{Conclusions}
\label{sec-con}

Nagumo-type equations arise from different fields and have been widely studied. However, preservation of the nonnegativity and boundedness in the numerical solution remains an important and challenge task.
Some results have been obtained for the finite difference solution with isotropic diffusion, but little is
known for the finite element solution especially with anisotropic diffusion. 

In the previous sections we have studied the numerical solution of Nagumo-type equations
with both isotropic and anisotropic diffusion. Linear finite elements on simplicial meshes and
the backward Euler scheme have been used for the spatial and temporal discretization, respectively.
Four different treatments for the nonlinear reaction term have been considered, including the explicit method (EM),
the fully implicit method (IM), and two hybrid explicit-implicit methods (HEIM I and HEIM II). 
The conditions for the mesh and the time step  have been developed for the numerical solution
to preserve the nonnegativity and/or boundedness of the solution of the continuous problem.
They are stated in Theorems \ref{thm-em}, \ref{thm-im}, \ref{thm-heim1}, and \ref{thm-heim2}. 
Roughly speaking, the mesh conditions require that the mesh be at least acute in the metric $\D^{-1}$
where $\D$ is the diffusion matrix while the time step conditions
ask the time step to be bounded below and above. An only exception is HEIM II which does not
have an upper bound placed on the time step.
Moreover, if the mesh is uniformly acute in the metric $\D^{-1}$ as it is being refined,
the time step conditions essentially become
\[
\mathcal{O}(h^2) \le \Delta t_n \le \mathcal{O}(1),
\]
which can be satisfied easily when the mesh is sufficiently fine.

We have also studied the effects of lumping on the mass matrix and reaction term. The analysis shows that
lumping not only removes the low bound requirement for the time step but also relaxes the requirement
on the mesh for the numerical solution to preserve the nonnegativity and/or boundedness. Indeed, with lumping
the conditions only require that the mesh be nonobtuse in the metric $\D^{-1}$
and the time step be bounded as
\[
\Delta t_n \le \mathcal{O}(1) .
\]
Once again, there is no upper bound for $\Delta t_n$ for HEIM II. 

HEIM II leads to the weakest conditions among the four approximations
for the reaction term. However, numerical experiment shows that other approximations give
smaller temporal discretization errors. Numerical examples also confirm the first order
convergence in time and second order convergence in space for the scheme with all four approximations to
the reaction term.

\vspace{20pt}

\textbf{Acknowledgments.} The authors are grateful to Erik Van Vleck for his valuable comments
in improving the quality of the paper.

\end{document}